\pgfplotsset{compat=1.14}
\newcommand{\N}{\mathbb{N}}
\newcommand{\G}{\mathcal{G}}
\newcommand{\GG}{\mathcal{G}}
\newcommand {\demo}{\hskip -0.6cm{\bf Proof.  }}
\newcommand {\fim}{\hfill{$\square$}\vskip 1pc}
\newtheorem{teo}{Theorem}[section]
\newtheorem{lema}[teo]{Lemma}
\newtheorem{prop}[teo]{Proposition}
\theoremstyle{definition}
\newtheorem{defin}[teo]{Definition}
\newtheorem{exe}[teo]{Example}
\newtheorem{obs}[teo]{Observation}
\newtheorem{remark}[teo]{Remark}
\begin{document}

\title{Li-Yorke chaos for ultragraph shift spaces}

\author{
\small{Daniel Gon\c{c}alves}\\
\footnotesize{UFSC -- Departmento de Matem\'atica}\\
\footnotesize{88040-900 Florian\'{o}polis - SC, Brazil}\\
\footnotesize{\texttt{daemig@gmail.com}}
\and
\small{Bruno Brogni Uggioni}\\
\footnotesize{IFRS -- Campus Canoas}\\
\footnotesize{92412-240 Canoas - RS, Brazil}, \\
\footnotesize{\texttt{bruno.uggioni@canoas.ifrs.edu.br}}}

\date{}

\maketitle

\begin{abstract} Recently, in connection with C*-algebra theory, the first author and Danilo Royer introduced ultragraph shift spaces. In this paper we define a family of metrics for the topology in such spaces, and use these metrics to study the existence of chaos in the shift. In particular we characterize all ultragraph shift spaces that have Li-Yorke chaos (an uncountable scrambled set), and prove that such property implies the existence of a perfect and scrambled set in the ultragraph shift space. Furthermore, this scrambled set can be chosen compact, what is not the case for a labelled edge shift (with the product topology) of an infinite graph. % as shown by Raines and Tyler in \cite{Raines}.
\end{abstract}

\vspace{1.0pc}
MSC 2010: 37B10, 37B20, 37D40, 54H20, 

\vspace{1.0pc}
Keywords: Symbolic dynamics, Li-Yorke Chaos, ultragraph edge shift space, infinite alphabet.

\vspace{1 cm}
\section{Introduction}

Symbolic dynamics over infinite alphabets is a subject of many interpretations, as there are many approaches to the definition of a shift space over an infinite alphabet. The most common approach so far is to take the shift space as the product space of the alphabet with the product topology, even though the resulting shift space is not even locally compact. This complete lack of compactness leads to some properties that differ greatly from  those of shift spaces over finite alphabets (for example, the entropy of a subshift may increase, see \cite{Petersen, Sal}). Recently, Ott-Tomforde-Willis proposed a shift space connected with C*-algebras and many of its properties were studied (see \cite{GR, GRultra, GSS, GSS1, OTW}). Deepening the connection with C*-algebras, and building on work of Webster (see \cite{Webster}), a new generalization of shifts of finite type to the infinite alphabet case was proposed in \cite{GRultrapartial} (see \cite{CG} for further connections with C*-algebras). The definition in \cite{GRultrapartial} relies on ultragraphs and the resulting shift space contains a countable basis of clopen subsets (which for ultragraphs that satisfy a mild condition turn to be compact-open subsets). Given the above setting, to understand, and compare, the dynamics of shifts over infinite alphabets is a relevant problem (note that Curtis-Hend-Lund type theorems for shifts over infinite alphabets were described in \cite{GSCSC, GSS0, GS}). In this paper we study Li-Yorke Chaos for the ultragraph shift spaces defined in \cite{GRultrapartial}. In particular, regarding Li-Yorke chaoticity, we show that these shifts behave like shifts over a finite alphabet, what is not the case for shift spaces defined with the product topology (see \cite{Raines}). 

There are several notions of chaos in the mathematical literature, as one can see for instance in \cite{Kolyada}, where the author presents a brief survey of the concepts of chaos and relates them with topological properties of the associated systems. Informally, one can say that the basic idea present in many approaches is the following: there exists chaos when one can not predict the behavior of many trajectories of a given system, even in the case when it is possible to intuit the location of some points of the trajectory. Historically speaking, the first definition of chaos in a dynamical system was given by Li and Yorke in \cite{Li-Yorke}: given a map $f: X \rightarrow X$ between the compact metric space $X$, and $x,y \in X$ any distinct elements, the pair $(x,y)$ is called \textit{scrambled} if:

\begin{center}
(i) $\displaystyle{\limsup_{n \rightarrow \infty}d(f^{n}(x),f^{n}(y)) > 0}$,\\
(ii) $\displaystyle{\liminf_{n \rightarrow \infty}d(f^{n}(x),f^{n}(y)) = 0}$.\\
\end{center}
More generally, a set $A \subseteq X$ is \textit{scrambled} if every pair $x,y \in A$ is scrambled, and the system $(X,f,d)$ is \textit{chaotic} (or satisfies \textit{Li-Yorke chaos}) if it admits an uncountable scrambled set. 

The context of compact metric spaces equipped with a continuous function is probably the most studied context regarding presence of chaos. For instance, in \cite{Ethan} and \cite{Kolyada} the authors discuss several notions of chaos and sensitivity. Also, in \cite {Garcia} the notion of mean Li-Yorke chaos is found and in \cite{DaiX} chaos is studied in a more general setting. Recently the study of Li-Yorke and other types of chaos has been expanded to the non compact and measurable setting. For the measure theory context see \cite{Downarowicz}, and, more related to our purposes, Raines and Underwood, in \cite{Raines}, focus on shifts (with product topology) associated to an infinite countable alphabet. More precisely, in \cite{Raines} it is shown that the edge-shift space (with the product topology) associated with an infinite graph has Li-Yorke chaos if, and only if, it admits a single scrambled pair, and, even more, in this case the scrambled set may be chosen closed and perfect (but not necessarily compact). Furthermore, Raines and Underwood present an example of a sofic shift over an infinite alphabet which admits a scrambled pair (therefore infinitely many scrambled pairs) but does not satisfy Li-Yorke chaos. 

Motivated by the above, we investigate the presence of Li-Yorke chaos in ultragraph shift spaces. We notice some important differences and resemblances between the uncountable scrambled sets of ultragraph shift spaces and of edge shift spaces with product topology. In our case, for instance, the scrambled set can be chosen compact and perfect (Theorem \ref{teothebest}), in the same way that it can be done for shifts of finite type over finite alphabets (see \cite{Raines}). On the other hand, as it happens for the labelled edge shifts studied in \cite{Raines}, we are able to find an ultragraph shift space which admits a scrambled pair but does not present Li-Yorke chaos.
 
We organize the paper as follows. In Section~2 we set up basic notation and recall some relevant results from the literature regarding graphs and ultragraphs. Also, we define a family of metrics in the ultragraph shift space $X$ associated to an ultragraph $\mathcal{G}$ (this is a new result). In Section~3 we describe Li-Yorke chaoticity in ultragraph shift spaces by showing, among other results, that the existence of a scrambled pair formed by infinite paths implies Li-Yorke chaos, that is, the existence of an uncountable scrambled set. Even more, the uncountable scrambled set can always be chosen compact. We also present an example of an ultragraph (which is not a graph) such that the associated shift space admits a scrambled pair but does not present Li-Yorke chaos. We summarize our results related to chaos in ultragraph shift spaces in Theorem~\ref{teosumma}.

\section{Ultragraph shift spaces}

We begin with some standard definitions regarding ultragraphs, as introduced in \cite{Marrero} and \cite{T}. After this we recall the definition of an ultragraph shift space $X$, as given in \cite{GRultrapartial}, and then define a family of metrics for the topology in $X$.

\subsection{Background}

\begin{defin}\label{def of ultragraph}
An \emph{ultragraph} is a quadruple $\mathcal{G}=(G^0, \mathcal{G}^1, r,s)$ consisting of two countable sets $G^0, \mathcal{G}^1$, a map $s:\mathcal{G}^1 \to G^0$, and a map $r:\mathcal{G}^1 \to P(G^0)\setminus \{\emptyset\}$, where $P(G^0)$ stands for the power set of $G^0$.
\end{defin}

\begin{defin}\label{def of mathcal{G}^0}
Let $\mathcal{G}$ be an ultragraph. Define $\mathcal{G}^0$ to be the smallest subset of $P(G^0)$ that contains $\{v\}$ for all $v\in G^0$, contains $r(e)$ for all $e\in \mathcal{G}^1$, and is closed under finite unions and nonempty finite intersections.
\end{defin}

%Before we set up the notation for graphs and ultragraphs, we present the following useful description of $\GG^0$.

%\begin{lema}\label{description} \cite[Lemma~2.12]{T}
%If $\mathcal{G} ( G^0,
%\mathcal{G}^1,r,s)$ is an ultragraph, then \begin{align*} \mathcal{G}^0 = \{
%\bigcap_{e
%\in X_1} r(e)
%\cup \ldots 
%\cup \bigcap_{e \in X_n} r(e) \cup F : & \ \text{$X_1,
%\ldots, X_n$ are finite subsets of $\mathcal{G}^1$} \\ & \text{ and $F$
%is a finite subset of $G^0$} \}.
%\end{align*}  
%Furthermore, $F$ may be chosen to
%be disjoint from $\bigcap_{e \in X_1} r(e) \cup \ldots 
%\cup \bigcap_{e \in X_n} r(e)$.
%\end{lema}

Let $\mathcal{G}$ be an ultragraph. A \textit{finite path} in $\mathcal{G}$ is either an element of $\mathcal{G}%
^{0}$ or a sequence of edges $e_{1}\ldots e_{k}$ in $\mathcal{G}^{1}$ where
$s\left(  e_{i+1}\right)  \in r\left(  e_{i}\right)  $ for $1\leq i\leq k$. If
we write $\alpha=e_{1}\ldots e_{k}$, the length $\left|  \alpha\right|  $ of
$\alpha$ is just $k$. The length $|A|$ of a path $A\in\mathcal{G}^{0}$ is
zero. We define $r\left(  \alpha\right)  =r\left(  e_{k}\right)  $ and
$s\left(  \alpha\right)  =s\left(  e_{1}\right)  $. For $A\in\mathcal{G}^{0}$,
we set $r\left(  A\right)  =A=s\left(  A\right)  $. The set of
finite paths in $\mathcal{G}$ is denoted by $\mathcal{G}^{\ast}$. An \textit{infinite path} in $\mathcal{G}$ is an infinite sequence of edges $\gamma=e_{1}e_{2}\ldots$ in $\prod \mathcal{G}^{1}$, where
$s\left(  e_{i+1}\right)  \in r\left(  e_{i}\right)  $ for all $i$. The set of
infinite paths  in $\mathcal{G}$ is denoted by $\mathfrak
{p}^{\infty}$. The length $\left|  \gamma\right|  $ of $\gamma\in\mathfrak
{p}^{\infty}$ is defined to be $\infty$. A vertex $v$ in $\mathcal{G}$ is
called a \emph{sink} if $\left|  s^{-1}\left(  v\right)  \right|  =0$ and is
called an \emph{infinite emitter} if $\left|  s^{-1}\left(  v\right)  \right|
=\infty$. %We say that a vertex $v$ is a \emph{singular vertex} if it is either
%a sink or an infinite emitter.

For $n\geq1,$ we define
$\mathfrak{p}^{n}:=\{\left(  \alpha,A\right)  :\alpha\in\mathcal{G}^{\ast
},\left\vert \alpha\right\vert =n,$ $A\in\mathcal{G}^{0},A\subseteq r\left(
\alpha\right)  \}$. We specify that $\left(  \alpha,A\right)  =(\beta,B)$ if
and only if $\alpha=\beta$ and $A=B$. We set $\mathfrak{p}^{0}:=\mathcal{G}%
^{0}$ and we let $\mathfrak{p}:=\coprod\limits_{n\geq0}\mathfrak{p}^{n}$. We embed the set of finite paths $\GG^*$ in $\mathfrak{p}$ by sending $\alpha$ to $(\alpha, r(\alpha))$. We
define the length of a pair $\left(  \alpha,A\right)  $, $\left\vert \left(
\alpha,A\right)  \right\vert $, to be the length of $\alpha$, $\left\vert
\alpha\right\vert $. We call $\mathfrak{p}$ the \emph{ultrapath space}
associated with $\mathcal{G}$ and the elements of $\mathfrak{p}$ are called
\emph{ultrapaths} (or just paths when the context is clear). Each $A\in\mathcal{G}^{0}$ is regarded as an ultrapath of length zero and can be identified with the pair $(A,A)$. We may extend the range map $r$ and the source map $s$ to
$\mathfrak{p}$ by the formulas, $r\left(  \left(  \alpha,A\right)  \right)
=A$, $s\left(  \left(  \alpha,A\right)  \right)  =s\left(  \alpha\right)
$ and $r\left(  A\right)  =s\left(  A\right)  =A$.

We concatenate elements in $\mathfrak{p}$ in the following way: If $x=(\alpha,A)$ and $y=(\beta,B)$, with $|x|\geq 1, |y|\geq 1$, then $x\cdot y$ is defined if and only if
$s(\beta)\in A$, and in this case, $x\cdot y:=(\alpha\beta,B)$. Also we
specify that:
\begin{equation}
x\cdot y=\left\{
\begin{array}
[c]{ll}%
x\cap y & \text{if }x,y\in\mathcal{G}^{0}\text{ and if }x\cap y\neq\emptyset\\
y & \text{if }x\in\mathcal{G}^{0}\text{, }\left|  y\right|  \geq1\text{, and
if }x\cap s\left(  y\right)  \neq\emptyset\\
x_{y} & \text{if }y\in\mathcal{G}^{0}\text{, }\left|  x\right|  \geq1\text{,
and if }r\left(  x\right)  \cap y\neq\emptyset
\end{array}
\right.  \label{specify}%
\end{equation}
where, if $x=\left(  \alpha,A\right)  $, $\left|  \alpha\right|  \geq1$ and if
$y\in\mathcal{G}^{0}$, the expression $x_{y}$ is defined to be $\left(
\alpha,A\cap y\right)  $. Given $x,y\in\mathfrak{p}$, we say that $x$ has $y$ as an \emph{initial segment} if
$x=y\cdot x^{\prime}$, for some $x^{\prime}\in\mathfrak{p}$, with $s\left(
x^{\prime}\right)  \cap r\left(  y\right)  \neq\emptyset$. 

We extend the source map $s$ to $\mathfrak
{p}^{\infty}$, by defining $s(\gamma)=s\left(  e_{1}\right)  $, where
$\gamma=e_{1}e_{2}\ldots$. We may concatenate pairs in $\mathfrak{p}$, with
infinite paths in $\mathfrak{p}^{\infty}$ as follows. If $y=\left(
\alpha,A\right)  \in\mathfrak{p}$, and if $\gamma=e_{1}e_{2}\ldots\in
\mathfrak{p}^{\infty}$ are such that $s\left(  \gamma\right)  \in r\left(
y\right)  =A$, then the expression $y\cdot\gamma$ is defined to be
$\alpha\gamma=\alpha e_{1}e_{2}...\in\mathfrak{p}^{\infty}$. If $y=$
$A\in\mathcal{G}^{0}$, we define $y\cdot\gamma=A\cdot\gamma=\gamma$ whenever
$s\left(  \gamma\right)  \in A$. Of course $y\cdot\gamma$ is not defined if
$s\left(  \gamma\right)  \notin r\left(  y\right)  =A$. 

%\begin{remark} When no confusion arises we will omit the dot in the notation of concatenation defined above, so that $x\cdot y$ will be denoted by $xy$.
%\end{remark}

\begin{defin}
\label{infinte emitter} For each subset $A$ of $G^{0}$, let
$\varepsilon\left(  A\right)  $ be the set $\{ e\in\mathcal{G}^{1}:s\left(
e\right)  \in A\}$. We shall say that a set $A$ in $\mathcal{G}^{0}$ is an
\emph{infinite emitter} whenever $\varepsilon\left(  A\right)  $ is infinite.
\end{defin}

The key concept in the definition of the shift space $X$ associated to $\GG$ is that of minimal infinite emitters. We recall this below.

\begin{defin}\label{minimal} Let $\GG$ be an ultragraph and $A\in \GG^0$. We say that $A$ is a minimal infinite emitter if it is an infinite emitter that contains no proper subsets (in $\GG^0$) that are infinite emitters. Equivalently, $A$ is a minimal infinite emitter if it is an infinite emitter and has the property that, if $B\in \GG^0$ is an infinite emitter, and $B\subseteq A$, then $B=A$. For a finite path $\alpha$ in $\GG$, we say that $A$ is a minimal infinite emitter in $r(\alpha)$ if $A$ is a minimal infinite emitter and $A\subseteq r(\alpha)$. We denote the set of all minimal infinite emitters in $r(\alpha)$ by $M_\alpha$.
\end{defin}

For later use we recall the following Lemma.

\begin{lema}\label{miniftyem} Let $x=(\alpha, A) \in \mathfrak{p}$ and suppose that $A$ is a minimal infinite emitter. If the cardinality of $A$ is finite then it is equal to one and, if the cardinality of $A$ is infinite, then $A = \bigcap\limits_{e\in Y} r(e)$ for some finite set $Y\subseteq \GG^1$.
\end{lema}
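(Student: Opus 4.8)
The plan is to first put every member of $\mathcal{G}^0$ into a normal form and then read off both cases from this description together with the minimality hypothesis. Concretely, I would establish the following structure lemma: every nonempty $B\in\mathcal{G}^0$ can be written as
$$B = F \cup \bigcup_{i=1}^{m} \bigcap_{e\in Y_i} r(e),$$
where $F\subseteq G^0$ is finite, $m\geq 0$, and each $Y_i$ is a finite nonempty subset of $\mathcal{G}^1$. To prove this, let $\mathcal{C}$ be the family of all subsets of $G^0$ of this shape. Then $\mathcal{C}$ contains every generator of $\mathcal{G}^0$ (take $F=\{v\}$, $m=0$ for a singleton, and $F=\emptyset$, $Y_1=\{e\}$ for each $r(e)$) and is visibly closed under finite unions. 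Closure under nonempty finite intersection is the only delicate point: distributing $B\cap B'$ produces the terms $\bigcap_{e\in Y_i} r(e)\cap\bigcap_{e\in Y'_j} r(e)=\bigcap_{e\in Y_i\cup Y'_j} r(e)$ together with contributions meeting $F$ or $F'$, each of which is a subset of a finite set; discarding the empty blocks leaves a set again of the stated form. Since $\mathcal{C}\subseteq P(G^0)$ contains the generators and is closed under both operations, minimality of $\mathcal{G}^0$ gives $\mathcal{G}^0\subseteq\mathcal{C}$, and the reverse inclusion is clear.

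With the normal form available, both cases follow quickly. Suppose first that $A$ is finite. For each $v\in A$ the singleton $\{v\}$ lies in $\mathcal{G}^0$ and satisfies $\{v\}\subseteq A$; since $\varepsilon(A)=\bigcup_{v\in A}\varepsilon(\{v\})$ is infinite while $A$ is finite, some $v_0\in A$ has $\varepsilon(\{v_0\})$ infinite, so $\{v_0\}$ is an infinite emitter contained in $A$, and minimality forces $\{v_0\}=A$, i.e. $|A|=1$. Now suppose $A$ is infinite and write $A=F\cup\bigcup_{i=1}^m B_i$ with $B_i=\bigcap_{e\in Y_i} r(e)$ as above. No $v\in A$ can be an infinite emitter vertex, for otherwise $\{v\}$ would be an infinite emitter in $\mathcal{G}^0$ properly contained in $A$, contradicting minimality; hence $\varepsilon(F)$ is finite. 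Since $\varepsilon(A)=\varepsilon(F)\cup\bigcup_i\varepsilon(B_i)$ is infinite, some block $B_{i_0}$ is an infinite emitter. As $B_{i_0}$ is a nonempty finite intersection of range sets it belongs to $\mathcal{G}^0$ and $B_{i_0}\subseteq A$, so minimality yields $B_{i_0}=A$, that is $A=\bigcap_{e\in Y_{i_0}} r(e)$ with $Y=Y_{i_0}$ finite.

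The main obstacle is the structure lemma, and within it the verification of closure under intersection: one must check that distributing the intersection keeps the result inside $\mathcal{C}$ and, in particular, that the mixed terms involving the finite parts contribute only finitely many vertices. Once the normal form is secured, each case reduces to applying the minimality of $A$ to a carefully chosen subset already known to lie in $\mathcal{G}^0$ — a singleton in the finite case and an intersection block $\bigcap_{e\in Y} r(e)$ in the infinite case — and the conclusion is immediate.
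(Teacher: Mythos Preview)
Your argument is correct. The structure lemma you prove for $\mathcal{G}^0$ is the standard description (it goes back to Tomforde's work on ultragraph algebras), and once it is in hand both conclusions follow exactly as you say: in the finite case pigeonhole produces an infinite-emitter singleton $\{v_0\}\subseteq A$, and in the infinite case one of the intersection blocks $\bigcap_{e\in Y_{i_0}} r(e)$ must be an infinite emitter contained in $A$; minimality finishes each case. Your handling of closure under intersection is fine: the mixed terms $F\cap B'_j$ and $B_i\cap F'$ are finite, the pure terms $B_i\cap B'_j$ are again finite intersections of ranges, and discarding empty blocks is harmless since $\mathcal{G}^0$ is only required to be closed under \emph{nonempty} intersections.

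As for comparison with the paper: there is nothing to compare against. The authors state Lemma~\ref{miniftyem} with the phrase ``For later use we recall the following Lemma'' and give no proof; the result is imported from the companion paper \cite{GRultrapartial}. Your proof is the natural one and almost certainly coincides with the argument given there, since the normal form $F\cup\bigcup_i\bigcap_{e\in Y_i} r(e)$ is the canonical tool for analysing elements of $\mathcal{G}^0$.
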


%Since we are following the ideas in \cite{GRultrapartial} we must assume that our ultragraphs have no sinks. We make this assumption explicit below:

%{\bf Throughout assumption:} From now on all ultragraphs in this paper are assumed to have no sinks. Also, $\mathcal{G}$ will always denote an ultragraph. 

Associated to an ultragraph with no sinks, we have the topological space $X= \mathfrak{p}^{\infty} \cup X_{fin}$, where 
$$X_{fin} = \{(\alpha,A)\in \mathfrak{p}: |\alpha|\geq 1 \text{ and } A\in M_\alpha \}\cup
 \{(A,A)\in \GG^0: A \text{ is a minimal infinite emitter}\}, $$ and the topology has a basis given by the collection $$\{D_{(\beta,B)}: (\beta,B) \in \mathfrak{p}, |\beta|\geq 1\ \} \cup \{D_{(\beta, B),F}:(\beta, B) \in X_{fin}, F\subset \varepsilon\left( B \right), |F|<\infty \},$$ where for each $(\beta,B)\in \mathfrak{p}$ we have that $$D_{(\beta,B)}= \{(\beta, A): A\subset B \text{ and } A\in M_\beta \}\cup\{y \in X: y = \beta \gamma', s(\gamma')\in B\},$$ and, for $(\beta,B)\in X_{fin}$ and $F$ a finite subset of $\varepsilon\left( B \right)$,  $$D_{(\beta, B),F}=  \{(\beta, B)\}\cup\{y \in X: y = \beta \gamma', \gamma_1' \in \ \varepsilon\left( B \right)\setminus F\}.$$
 
\begin{remark}\label{cylindersets} For every $(\beta,B)\in \mathfrak{p}$, we identify $D_{(\beta, B)}$ with $D_{(\beta, B),F}$, where $F=\emptyset$. Furtheremore, we call the basic elements of the topology of $X$ given above by \emph{cylinder sets}.
\end{remark}

For our work the description of convergence of sequences in $X$ is important, so we recall it below.

\begin{prop}\label{convseq} Let $(x^n)_{n=1}^{\infty}$ be a sequence of elements in $X$, where $x^n = (\gamma^n_1\ldots \gamma^n_{k_n}, A_n)$ or $x^n = \gamma_1^n \gamma_2^n \ldots$, and let $x \in X$.
\begin{enumerate}[(a)]
\item If $|x|= \infty$, say $x=\gamma_1 \gamma_2 \ldots$, then $\{x^n\}_{n=1}^{\infty}$ converges to $x$ if, and only if, for every $M\in \N$ there exists $N\in \N$ such that $n>N$ implies that $|x^n|\geq M$ and $\gamma^n_i= \gamma_i$ for all $1\leq i \leq M$.
\item If $|x|< \infty$, say $x=(\gamma_1 \ldots \gamma_k, A)$, then $\{x^n\}_{n=1}^{\infty}$ converges to $x$ if, and only if, for every finite subset $F\subseteq \varepsilon\left(  A\right)$ there exists $N\in \N$ such that $n > N$ implies that $x^n = x$ or $|x^n|> |x|$, $\gamma^n_{|x|+1} \in \ \varepsilon\left(  A\right)\setminus F$, and $\gamma^n_i = \gamma_i$ for all $1 \leq i \leq |x|$. 
\end{enumerate}
\end{prop}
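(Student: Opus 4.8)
The plan is to use the fact that, since the topology of $X$ is given by the explicit basis of cylinder sets, a sequence $(x^n)$ converges to $x$ if and only if it is eventually contained in every basic open set containing $x$; that is, for each cylinder $U$ with $x\in U$ there is $N$ with $x^n\in U$ for all $n>N$. Each part then splits into two implications: the ``only if'' direction is obtained by feeding carefully chosen cylinders into the definition of convergence and reading off the coordinate conditions, while the ``if'' direction requires showing that an \emph{arbitrary} cylinder containing $x$ eventually captures the whole tail of the sequence.

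For part (a), where $x=\gamma_1\gamma_2\ldots\in\mathfrak{p}^{\infty}$, I would argue as follows. For the ``only if'' direction, given $M$ I take $U=D_{(\gamma_1\ldots\gamma_M,\,r(\gamma_M))}$, which is a legitimate basic open set (since $r(\gamma_M)=r(\gamma_1\ldots\gamma_M)$) and contains $x$ because $s(\gamma_{M+1})\in r(\gamma_M)$. Unwinding the definition of $D_{(\beta,B)}$, membership $x^n\in U$ forces $x^n$ either to have length exactly $M$ with first $M$ edges $\gamma_1,\ldots,\gamma_M$, or length strictly larger than $M$ with the same first $M$ edges; in both cases $|x^n|\ge M$ and $\gamma^n_i=\gamma_i$ for $1\le i\le M$, which is the asserted condition. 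For the ``if'' direction I take an arbitrary basic $U\ni x$; whether $U=D_{(\beta,B)}$ or $U=D_{(\beta,B),F}$, the infinite path $x$ lies in $U$ only as a genuine continuation $x=\beta\gamma'$, so $\beta$ is an initial segment $\gamma_1\ldots\gamma_{|\beta|}$ and the relevant ``next edge'' constraint (on $s(\gamma')$, resp.\ $\gamma'_1\in\varepsilon(B)\setminus F$) is already encoded in $x$. Applying the coordinate hypothesis with $M=|\beta|+1$ then pins down enough edges of $x^n$ to guarantee $x^n\in U$; note this also covers the case $|\beta|=0$ coming from $(A,A)\in X_{fin}$.

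Part (b) is the delicate one, since $x=(\gamma_1\ldots\gamma_k,A)\in X_{fin}$ and $A$ is a minimal infinite emitter, so $\varepsilon(A)$ is infinite and the basic neighborhoods $D_{(\gamma_1\ldots\gamma_k,A),F}$ shrink precisely by forbidding finitely many outgoing edges. For ``only if'', given a finite $F\subseteq\varepsilon(A)$ I plug $U=D_{(\gamma_1\ldots\gamma_k,A),F}$ into the definition of convergence; membership $x^n\in U$ says exactly that $x^n=x$, or that $x^n$ properly extends $x$ with $\gamma^n_{k+1}\in\varepsilon(A)\setminus F$ and matching first $k$ edges, which is the stated condition. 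The ``if'' direction is where the main obstacle lies, because a finite path can belong to several distinct families of cylinders. I would therefore run a case analysis on an arbitrary basic $U\ni x$ according to whether $U$ has the form $D_{(\beta,B)}$ or $D_{(\beta,B),F}$, and whether the stem $\beta$ equals the full $\gamma_1\ldots\gamma_k$ or is a proper initial segment. When $\beta$ is a proper initial segment, the hypothesis with $F'=\emptyset$ already forces the matching edges and the defining source/edge condition of $U$; when $\beta=\gamma_1\ldots\gamma_k$, I apply the hypothesis with $F'=F$ (in the $D_{(\beta,B),F}$ case) or with $F'=\emptyset$ together with $A\subseteq B$ (in the $D_{(\beta,B)}$ case), using that $\gamma^n_{k+1}\in\varepsilon(A)$ gives $s(\gamma^n_{k+1})\in A\subseteq B$ to place $x^n$ in $U$.

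The principal difficulty, as indicated, is the backward direction of (b): one must enumerate every type of cylinder containing a finite path and, in each, verify both the edge-matching and the range/source constraints, while distinguishing the possibilities $x^n=x$, $x^n\in X_{fin}$ of larger length, and $x^n\in\mathfrak{p}^{\infty}$. The remaining bookkeeping — that the chosen cylinders are genuinely basic open sets and that the inclusions $A\subseteq B\subseteq r(\beta)$ hold — is routine and I would dispatch it inline.
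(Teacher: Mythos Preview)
The paper does not supply a proof of this proposition: it is introduced with ``we recall it below'' and attributed to \cite{GRultrapartial}, so there is no in-paper argument to compare against.

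That said, your approach is correct and is the natural one: since the topology is given explicitly by the cylinder basis, convergence is equivalent to eventual membership in every basic neighbourhood of $x$, and the two implications in each part follow exactly as you outline. Your case analysis in part~(b) is complete and you have isolated the right facts in each subcase --- in particular that $x\in D_{(\beta,B)}$ with $|\beta|=k$ forces $A\subseteq B$ (so $\gamma^n_{k+1}\in\varepsilon(A)$ yields $s(\gamma^n_{k+1})\in B$), while $x\in D_{(\beta,B),F}$ with $|\beta|=k$ forces $A=B$ (so the choice $F'=F$ works). One small omission worth making explicit in a final write-up: your list of cylinders containing $x$ should also cover the length-zero cylinders $D_{(A',A'),F'}$ coming from minimal infinite emitters $(A',A')\in X_{fin}$; when $|x|=k\ge 1$ this forces $\gamma_1\in\varepsilon(A')\setminus F'$ and the hypothesis with $F=\emptyset$ disposes of it, while when $|x|=0$ one has $A=A'$ and the hypothesis with $F=F'$ applies. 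With that addition the argument is complete.
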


Finally, attached to the space $X$ we have the shift map, as in \cite{GRultrapartial}:

\begin{defin}\label{shift-map-def}
The \emph{shift map} is the function $\sigma : X \rightarrow X$ defined by $$\sigma(x) =  \begin{cases} \gamma_2 \gamma_3 \ldots & \text{ if $x = \gamma_1 \gamma_2 \ldots \in \mathfrak{p}^\infty$} \\ (\gamma_2 \ldots \gamma_n,A) & \text{ if $x = (\gamma_1 \ldots \gamma_n,A) \in X_{fin}$ and $|x|> 1$} \\(A,A) & \text{ if $x = (\gamma_1,A) \in X_{fin}$} \\ (A,A) & \text{ if $x = (A,A)\in X_{fin}$.} 
\end{cases}$$
\end{defin}

\subsection{A family of metrics in the ultragraph shift space}

It was shown in \cite{GRultrapartial} that the ultragraph shift space associated to an ultragraph is metrizable, but a specific metric was not built. In what follows, building from ideas presented in \cite{OTW} and \cite{Webster}, we define a family of metrics that induce the topology in $X$. We do so by embedding $X$ into $2^{\mathfrak{p}} = \{ 0, 1 \}^{\mathfrak{p}}$ (where we consider $\{ 0, 1 \}^{\mathfrak{p}}$ with the product topology), and then we use the embedding to ``pull back'' the metric in $2^{\mathfrak{p}}$ to a metric on $X$. 

\begin{defin}\label{defalpha}
We define a function $\alpha : X \rightarrow \{ 0, 1 \}^{\mathfrak{p}}$ by $$\alpha (x) (y) = \begin{cases} 1 & \text{ if $y$ is an initial segment of $x$,} \\ 0 & \text{ otherwise.} \end{cases}$$
\end{defin}

\begin{remark} Notice that convergence in $\{ 0, 1 \}^{\mathfrak{p}}$ is point-wise convergence.
\end{remark}

\begin{lema}\label{lemmahomeo}
The map $\alpha$ defined above is an embedding of $X$ into $\{ 0, 1 \}^{\mathfrak{p}}$, that is, it is a homeomorphism between $X$ and $\alpha(X)$.
\end{lema}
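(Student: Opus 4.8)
The plan is to verify the three requirements of an embedding in turn: that $\alpha$ is injective, continuous, and open onto its image. First I would record two simplifications. Since $G^0$ and $\mathcal{G}^1$ are countable and $\mathcal{G}^0$ is generated from them by finitely many unions and intersections, the index set $\mathfrak{p}$ is countable; hence $\{0,1\}^{\mathfrak{p}}$ is metrizable and convergence there is exactly eventual coordinatewise agreement. As both $X$ and $\alpha(X)$ are then first countable, I can test injectivity, continuity and continuity of the inverse on sequences, using the explicit description of convergence in Proposition \ref{convseq}. The computation underlying everything is the following: for a fixed $y=(\beta,B)\in\mathfrak{p}$ and $x\in X$, unwinding the concatenation rules shows that $y$ is an initial segment of $x$ precisely when the first $|\beta|$ edges of $x$ equal $\beta$ and the ``state reached after $\beta$'' lies in $B$; concretely $\alpha(x)(y)=1$ iff either $x=\beta\gamma'$ is infinite with $s(\gamma')\in B$, or $x=(\beta,A)$ is finite with $A\subseteq B$. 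In the language of the topology this says $\{x\in X:\alpha(x)(y)=1\}=D_{(\beta,B)}$.

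Injectivity I would prove by reconstructing $x$ from $\alpha(x)$. The set of lengths $\{|y|:\alpha(x)(y)=1\}$ is unbounded exactly when $x$ is infinite, which separates $X_{fin}$ from $\mathfrak{p}^{\infty}$. If $x$ is infinite its edges are read off from the length-$j$ initial segments for each $j$; if $x=(\gamma,A)$ is finite, $\gamma$ is recovered as the common edge word of the longest initial segments, and then $A=\bigcap\{B:(\gamma,B)\text{ is an initial segment of }x\}$, an intersection that is attained since $(\gamma,A)$ is itself an initial segment of $x$. Hence distinct points have distinct images.

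For continuity, let $x^n\to x$ and fix $y=(\beta,B)$; I must show $\alpha(x^n)(y)=\alpha(x)(y)$ for large $n$. Using Proposition \ref{convseq}, all cases where $y$ and $x$ disagree in their edge data, or where the relevant comparison happens at a coordinate strictly before $|x|$, are immediate, because eventually $x^n$ agrees with $x$ on a long enough prefix; the potentially divergent coordinates are forbidden by enlarging the finite exceptional set $F$ in Proposition \ref{convseq}(b). The one genuinely delicate case, which I expect to be the crux of the argument, is $x=(\gamma,A)$ finite with $y=(\gamma,B)$ of the same length and $A\not\subseteq B$, so that $\alpha(x)(y)=0$: a priori a sequence $x^n$ extending $x$ by edges sourced in $A\cap B$ would satisfy $\alpha(x^n)(y)=1$ while still converging to $x$. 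This is ruled out by the minimality of $A$ (Definition \ref{minimal}): since $A\cap B\in\mathcal{G}^0$ is a proper subset of the minimal infinite emitter $A$, it is not an infinite emitter, so $\varepsilon(A\cap B)$ is finite; taking $F\supseteq\varepsilon(A\cap B)$ in Proposition \ref{convseq}(b) forces $s(\gamma^n_{|x|+1})\notin B$ eventually, whence $\alpha(x^n)(y)=0$. Thus $\alpha$ is continuous.

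Finally, for continuity of $\alpha^{-1}$ I would go the other way: assume $\alpha(x^n)\to\alpha(x)$ and recover the convergence conditions of Proposition \ref{convseq} by probing $\alpha(x^n)$ at well-chosen coordinates. If $x=\gamma_1\gamma_2\cdots$ is infinite, testing against $y_M=(\gamma_1\cdots\gamma_M,r(\gamma_M))$, which is an initial segment of $x$, forces $|x^n|\ge M$ with matching first $M$ edges for large $n$, which is exactly Proposition \ref{convseq}(a). If $x=(\gamma_1\cdots\gamma_k,A)$ is finite, testing against $x$ itself forces $x$ to be an initial segment of $x^n$ eventually (so the first $k$ edges match and $s(\gamma^n_{k+1})\in A$ when $|x^n|>k$), testing against $y_e=(\gamma_1\cdots\gamma_k e,r(e))$ for the finitely many $e\in F$ forces $\gamma^n_{k+1}\notin F$, and when $|x^n|=k$ the minimality of $A$ again gives $x^n=x$; together these are Proposition \ref{convseq}(b). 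Combining injectivity, continuity and this sequential openness shows $\alpha$ is a homeomorphism onto $\alpha(X)$.
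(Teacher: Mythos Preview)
Your proof is correct and, for injectivity and the forward continuity of $\alpha$, follows the paper's line almost exactly: both use sequences via Proposition~\ref{convseq}, both isolate as the only subtle point the case $x=(\gamma,A)\in X_{fin}$, $y=(\gamma,B)$ with $A\not\subseteq B$, and both resolve it by observing that $A\cap B$ emits only finitely many edges. Your treatment of that case is in fact slightly cleaner than the paper's, which invokes Lemma~\ref{miniftyem} to split into $|A|=1$ versus $|A|=\infty$; you go straight to ``$A\cap B$ is a proper element of $\mathcal{G}^0$ inside the minimal infinite emitter $A$, hence $\varepsilon(A\cap B)$ is finite,'' which is all that is needed.

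The one genuine methodological difference is in the inverse direction. The paper proves that $\alpha$ is open: given a basic neighbourhood $D$ of $x$ in $X$ it exhibits a single coordinate $s\in\mathfrak{p}$ such that the subbasic cylinder $\{1\}_s$ in $\{0,1\}^{\mathfrak{p}}$ meets $\alpha(X)$ inside $\alpha(D)$. You instead prove sequential continuity of $\alpha^{-1}$ by probing $\alpha(x^n)$ at a finite family of coordinates (namely $x$ itself and the finitely many one-step extensions $(\gamma_1\cdots\gamma_k e,r(e))$ for $e\in F$) to recover the convergence criteria of Proposition~\ref{convseq}(b). In a first-countable setting the two are equivalent; the paper's argument is a touch shorter because one coordinate suffices, while yours stays entirely within the sequential framework you set up and makes the role of minimality of $A$ (forcing $x^n=x$ when $|x^n|=k$) explicit.
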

\demo

Clearly $\alpha$ is injective. We will show that it is bi-continuous.

Let $(x^i)$ be a sequence in $X$, where $x^i = (\gamma^i_1\ldots \gamma^i_{k_i}, A_i)$ or $x^i = \gamma_1^i \gamma_2^i \ldots$, converging to $x \in X$. We will show that, for every $y\in \mathfrak{p}$, the sequence $\alpha(x^i)(y)$ converges to $\alpha(x)(y)$.

Suppose that $|x|=\infty$, say $x=\gamma_1 \gamma_2 \ldots$. Let $p \in\mathfrak{p}$. Since $(x^i)$ converges to $x$, there exists $N>0$ such that, for all $i>N$, $\gamma^i_j = \gamma_j$ for all $j=1, \ldots, |p|+1$. Hence $\alpha(x^i)(p) = \alpha (x)(p)$ for all $i>N$.

Now suppose that  $|x|< \infty$, say $x=(\gamma_1 \ldots \gamma_k, A)$. Let $p \in\mathfrak{p}$, say $$p=(\beta_1 \ldots \beta_l, B).$$ 
If $|p|>|x|=k$ then $\alpha(x)(p)=0$. Since $(x^i)$ converges to $x$ there exists $N>0$ such that, for all $i>N$, we have $x^i = x$ or $\gamma_{k+1}^i \in \varepsilon(A)\setminus \{\beta_{k+1}\}$. Hence $\alpha(x^i)(p) = 0$ for all $i>N$. The case $|p|<|x|$ is straightforward. So we focus on the case $|p| = |x|$ (in this case $k=l$). We divide this in three sub-cases:

\begin{enumerate}[(a)]
\item $\beta_1 \ldots \beta_k \neq \gamma_1 \ldots \gamma_k$. This is straightforward.
\item $\beta_1 \ldots \beta_k = \gamma_1 \ldots \gamma_k$ and $ A \subseteq B$. Then $\alpha(x)(p)=1$. Let $N>0$ be such that for all $i> N$ it holds that $x^i = x$ or $\gamma^i_j= \gamma_j$, for all $j=1, \ldots, k$ and $s(\gamma^i_{k+1})\in A$. Then for all $i> N$ we have that $\alpha(x^i)(p)=1$.
\item $\beta_1 \ldots \beta_k = \gamma_1 \ldots \gamma_k$, and there exists a vertex $v\in A\setminus B$. Then $\alpha(x)(p)=0$. Since $A$ is a minimal infinite emitter, by Lemma~\ref{miniftyem} either $A=\{v\}$ or the cardinality of $A$ is infinite. Suppose that $A=\{v\}$. By the convergence of $(x^i)$ there exists $N>0$ such that, for all $i>N$ $x^i=x$ or $s(\gamma^i_{k+1})=v$. Hence $\alpha(x^i)(p)=0$ for all $i>N$. Finally, suppose that the cardinality of $A$ is infinite. Then the cardinality of $A\cap B$ is finite and $|s^{-1}(v)|<\infty$ for all $v\in A\cap B$ (since $A$ is a minimal infinite emitter). Let $F= \varepsilon(B\cap A)$. Then, by the convergence of $(x^i)$, there exists $N>0$ such that, for all $i>N$ either $x^i=x$ or $\gamma^i_{k+1} \in \varepsilon(A)\setminus F$. Hence $\alpha(x^i)(p)=0$ for all $i>N$ as desired.
\end{enumerate}

Since our space is not compact we can not infer that $\alpha^{-1}$ is continuous and have to prove it directly. We do this below.

Let $U$ be an open set in $X$. We have to show that $\alpha(U)$ is open. Let $y\in \alpha (U)$. Then $y = \alpha (x)$ for some $x\in U$. Suppose that $|x|= \infty$, say $x=\gamma_1 \gamma_2 \ldots$. Since $U$ is open, there is a basis element $D_{(\gamma_1\ldots \gamma_k)} $ such that $x\in D_{(\gamma_1\ldots \gamma_k)}\subseteq U$. Let $s = (\gamma_1 \ldots \gamma_k, \{s(\gamma_{k+1})\})$ and consider the open neighborhood $B=\{1\}_s \times \{0,1\}\times \{0,1\} \times \ldots$. Then $B\cap \alpha(X) \subseteq \alpha (U)$ since if $y=\alpha(x_1)$ in $B$ then $s$ is an initial segment of $x_1$. Now suppose that $|x|<\infty$, say $x=(\gamma_1 \ldots \gamma_k, B)$. Again, since $U$ is open, there is a basis element $D_{x,F}$ such that $x\in D_{x,F}\subseteq U$. Let s=$(\gamma_1 \ldots \gamma_k, \gamma_{k+1}, \{r(\gamma_{k+1})\})$ and consider the open neighborhood $B=\{1\}_s \times \{0,1\}\times \{0,1\} \times \ldots$. Then, as before, $B\cap \alpha(X) \subseteq \alpha (U)$ since if $y=\alpha(x_1)$ in $B$ then $s$ is an initial segment of $x_1$, and hence $x_1 \in D_{x,F}\subseteq U$.

\fim

Notice that  $\mathfrak{p}$ is countable. Thus we may list the elements of $\mathfrak{p}$ as $\mathfrak{p} = \{ p_1, p_2, p_3, \ldots \}$, order $\{ 0, 1 \}^{\mathfrak{p}}$ as $$\{ 0, 1 \}^{\mathfrak{p}} = \{0, 1 \}_{p_1} \times \{0,1\}_{p_2} \times \ldots,$$
and define a metric $d_\textrm{fin}$ on $\{ 0, 1 \}^{\mathfrak{p}}$ by 
$$
d_\textrm{fin} (\mu, \nu) := \begin{cases} 1/2^i & \text{$i \in \N$ is the smallest value such that $\mu(i) \neq \nu(i)$} \\
0 & \text{if $\mu(i) = \nu (i)$ for all $i \in \N$}.
\end{cases}$$
The metric $d_\textrm{fin}$ induces the product topology on $\{ 0, 1 \}^{\mathfrak{p}}$, and hence the topology on $X$ is induced by the metric $d_X$ on $X$ defined by $d_X(x,y) := d_\textrm{fin}( \alpha(x), \alpha(y))$.  Note that for $x,y \in X$, we have
\begin{equation}\label{definmetricanova}
d_X (x,y) := \begin{cases} 1/2^i & \text{$i \in \N$ is the smallest value such that $p_i$ is an initial} \\  & \text{ \ \ \ segment of one of $x$ or $y$ but not the other,} \\
0 & \text{if $x=y$}.
\end{cases}
\end{equation} 

\begin{remark}\label{ordem}
Notice that the metric $d_X$ depends on the order we choose for $\mathfrak{p} = \{ p_1, p_2, p_3, \ldots \}$.
\end{remark}

To illustrate how to use the metric we just defined, next we give a proof that, depending on the shift space $X$, the shift map $\sigma : X \rightarrow X$ is not continuous at elements of length zero. For instance, let $\{\gamma_{n}\}_{n \in \mathbb{N}}$ be a sequence of pair-wise distinct edges in a graph with only one vertex, and consider the sequences given by $x^{n} := \gamma_{n}\gamma_{1}\gamma_{1}\gamma_{1}\gamma_{1}\ldots$ and $y^{n} := \gamma_{n+1}\gamma_{2}\gamma_{2}\gamma_{2}\ldots$. As $\gamma_{j} \neq \gamma_{\ell}$ for all naturals $j \neq \ell$, if we fix $n$, then $\gamma_{n}$ is an initial segment of $x^{n}$ which is not an initial segment of $y^{n}$. On the same way, $\gamma_{n+1}$ is an initial segment of $y^{n}$ which is not an initial segment of $x^{n}$. So, there is a pair-wise distinct set of segments $\{p_{j_{n}}: n \in \mathbb{N}\}$ (see equality (\ref{definmetricanova}) and Remark (\ref{ordem})) such that $\gamma_{n}$ (or $\gamma_{n+1}$) is an initial segment of $p_{j_{n}}$ and satisfies: 
$$d_{X}(x^{n},y^{n}) = \dfrac{1}{2^{j_{n}}} \rightarrow 0.$$
On the other hand, for all natural $n$:
$$d_{X}(\sigma(x^{n}),\sigma(y^{n})) \geq \max \left\{\dfrac{1}{2^{j_{1}}},\dfrac{1}{2^{j_{2}}}\right\} > 0,$$
and hence $\sigma$ is not continuous.

\section{Li-Yorke Chaos and ultragraphs}

In this section we describe Li-Yorke chaos in ultragraph shift spaces. As we want to study the presence of chaos in the system $(X,\sigma, d_{X})$, we need a definition of chaos that does not involve continuity. The next definition is based on the works of Li and Yorke (\cite{Li-Yorke}) and Raines (\cite{Raines}). The only difference is that, in our case, we do not ask for the transformation to be continuous.

\begin{defin}\label{definscramble}
Let $f: M \rightarrow M$ be a map of a metric space $M$ with metric $d$, and let $x,y \in M$. The pair $x,y$ is \textit{scrambled} if
\begin{enumerate}[(i)]
\item $\displaystyle{\limsup_{n \rightarrow \infty}d(f^{n}(x),f^{n}(y)) > 0}$, and
\item $\displaystyle{\liminf_{n \rightarrow \infty}d(f^{n}(x),f^{n}(y)) = 0}$.
\end{enumerate}
A set $A \subseteq M$ is \textit{scrambled} if every pair $x,y \in A$ is scrambled and the system $(M,f,d)$ is \textit{chaotic} (or satisfies \textit{Li-Yorke chaos}) if it admits an uncountable scrambled set. 
\end{defin}

Let $\G$ be an ultragraph and $X$ be the associated shift space. In the next four propositions we characterize the asymptotic behavior of pairs of elements in $X$. More precisely, we characterize when a pair satisfy one of the limit equalities in the definition of a scrambled pair, that is, we describe the pairs $(x,y)$ for which $\displaystyle{\limsup_{n \rightarrow \infty} d_{X}(\sigma^{n}(x),\sigma^{n}(x)) >0}$ or $\displaystyle{\liminf_{n \rightarrow \infty} d_{X}(\sigma^{n}(x),\sigma^{n}(y)) =0}$. 

Notice that if $x = (\gamma_{1}\gamma_{2} \ldots \gamma_{k}, A)$ and $y=(\beta_{1}\beta_{2}\ldots\beta_{\ell}, B)$ are two finite paths, with $A$ and $B$ minimal infinite emitters, then $x,y$ is not a scrambled pair, since we can find a natural $N$ such that $\sigma^{n}(x) = A$ and $\sigma^{n}(y) = B$ for all $n \geq N$. Therefore, when looking for scrambled pairs, we just need to consider two cases, namely when $x$ is an infinite path and $y$ is a minimal infinite emitter (see Propositions~\ref{teosupemm} and \ref{teoinfemm}) or when $x$ and $y$ are both infinite paths (see Propositions \ref{Teolimsup0} and \ref{Teoinf0}). Before we proceed we need the following auxiliary result.

\begin{lema}\label{lemata} Let $x = \gamma_{1}\gamma_{2}\ldots \in \mathfrak{p}^{\infty}$ be an infinite path, $A \in \mathfrak{p}^{0}$ be a minimal infinite emitter and let $\{n_k\}$ be a subsequence of the natural numbers such that $s(\sigma^{n_{k}}(x))\in A$ for all $k$. If for all natural $m$ at most finitely many $\gamma_{n_i}$ satisfy $\gamma_{n_i} = \gamma_{m}$ then $d_{X}(\sigma^{n_{k}}(x),A)  \rightarrow 0.$
\end{lema}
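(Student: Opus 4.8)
The plan is to realize $A$ as the length-zero point $(A,A)$, which belongs to $X_{fin}\subseteq X$ precisely because $A$ is a minimal infinite emitter, and then to show that the subsequence $\sigma^{n_k}(x)$ converges to this point in the topology of $X$. Since $d_X$ metrizes that topology, convergence $\sigma^{n_k}(x)\to A$ is exactly the conclusion $d_X(\sigma^{n_k}(x),A)\to 0$. The tool to detect this convergence is the criterion for limits of finite length recorded in Proposition~\ref{convseq}(b), applied with the length-zero limit $(A,A)$; there the requirement ``$\gamma^n_i=\gamma_i$ for all $1\le i\le |A|=0$'' is vacuous, so the criterion reduces to controlling the \emph{initial edge} of each $\sigma^{n_k}(x)$.

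First I would unwind the relevant objects. Because $x=\gamma_1\gamma_2\ldots\in\mathfrak{p}^\infty$ is an infinite path, every shift $\sigma^{n_k}(x)=\gamma_{n_k+1}\gamma_{n_k+2}\ldots$ is again infinite; in particular $|\sigma^{n_k}(x)|=\infty>0=|A|$, so the two points are never equal and one always lands in the ``$|x^n|>|x|$'' branch of the criterion. Write $e_k$ for the initial edge of $\sigma^{n_k}(x)$. The hypothesis $s(\sigma^{n_k}(x))\in A$ says exactly that $s(e_k)\in A$, i.e. $e_k\in\varepsilon(A)$ for every $k$; this is the first half of the condition in Proposition~\ref{convseq}(b). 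The remaining half is the ``avoidance'' condition: for every finite $F\subseteq\varepsilon(A)$ there must exist $N$ with $e_k\in\varepsilon(A)\setminus F$ for all $k>N$.

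To verify the avoidance condition, fix a finite set $F\subseteq\varepsilon(A)$. The standing hypothesis states that each fixed edge occurs only finitely often among the initial edges of the subsequence (each $\gamma_m$ appears as some $e_k$ only finitely many times, and an edge not occurring in $x$ appears zero times). Hence for each $e\in F$ only finitely many indices $k$ satisfy $e_k=e$, and since $F$ is finite, only finitely many $k$ satisfy $e_k\in F$. Choosing $N$ larger than all such indices forces $e_k\in\varepsilon(A)\setminus F$ for $k>N$. By Proposition~\ref{convseq}(b) this gives $\sigma^{n_k}(x)\to A$, that is, $d_X(\sigma^{n_k}(x),A)\to 0$. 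I do not expect a genuine obstacle here; the only points demanding care are bookkeeping ones: correctly identifying $A$ with the length-zero point $(A,A)$ so that the finite-length convergence criterion applies, observing that edges of $F$ absent from $x$ are handled trivially while those present are controlled by the finiteness hypothesis, and keeping the indexing of the initial edges $e_k=\gamma_{n_k+1}$ straight. The minimality of $A$ enters only to guarantee $(A,A)\in X$.
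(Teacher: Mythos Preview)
Your argument is correct and takes a different route from the paper's. The paper works directly with the explicit formula for $d_X$: since $s(\sigma^{n_k}(x))\in A$, the element $A$ is an initial segment of each $\sigma^{n_k}(x)$, so $d_X(\sigma^{n_k}(x),A)=1/2^{n'_k}$ where $p_{n'_k}\in\mathfrak{p}$ is some path beginning with the initial edge $\sigma^{n_k}(x)_1$; the finiteness hypothesis then forces the set $\{p_{n'_k}:k\in\mathbb{N}\}$ to be infinite with each value attained only finitely often, whence $n'_k\to\infty$. You instead invoke Proposition~\ref{convseq}(b) to establish topological convergence $\sigma^{n_k}(x)\to (A,A)$ and then conclude via the fact that $d_X$ metrizes the topology. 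Your approach is arguably cleaner, since it reuses the already-stated convergence criterion rather than re-deriving its content from the enumeration of $\mathfrak{p}$; the paper's is more self-contained, needing only the definition~(\ref{definmetricanova}) of $d_X$. Both proofs share the same minor off-by-one ambiguity between $\gamma_{n_k}$ (as written in the hypothesis) and the actual initial edge $\gamma_{n_k+1}$ of $\sigma^{n_k}(x)$, which you rightly flag as a bookkeeping point; the paper does not make this explicit either, and it causes no trouble in the lemma's sole application inside Proposition~\ref{teosupemm}, where the finiteness assumption holds for all entries of $x$.
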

\begin{proof}
Let $x$, $A$, and $\{n_k\}$ be as in the statement of the proposition. Notice that $A$ is an initial segment of $\sigma^{n_{k}}(x)$ for all $k$. Hence the distance between $A$ and $\sigma^{n_{k}}(x)$ is determined by the position, in the enumeration of $\mathfrak{p}$, of a segment (call it $p_{n'_{k}}$)  that initiates at $\sigma^{n_{k}}(x)_1$, %that is, at the first entry of $\sigma^{n_{k}}(x)$ 
(see the definition of the metric (\ref{definmetricanova}) and Remark~(\ref{ordem})). Now, if for each $m$ at most finitely many $\gamma_{n_i}$ satisfy $\gamma_{n_i} = \gamma_{m}$, then the set $\{p_{n'_{k}}: k \in \mathbb{N}\}$ has infinite cardinality and the set $\{k: p_{n'_{k}}= p_{n'_{j}}\}$ has finite cardinality for all $j$. Therefore
$$d_{X}(\sigma^{n_{k}}(x),A) = \dfrac{1}{2^{n'_{k}}} \rightarrow 0.$$
\end{proof}

\begin{prop}\label{teosupemm}
Let $x = \gamma_{1}\gamma_{2}\ldots \in \mathfrak{p}^{\infty}$ be an infinite path and $A \in \mathfrak{p}^{0}$ be a minimal infinite emitter. Then
$$\displaystyle{\limsup_{n \rightarrow \infty} d_{X}(\sigma^{n}(x), A) >0}$$
if, and only if, one of the following two conditions hold: either there is an edge $\gamma_{m}$ and infinitely many indices $i_{k}$ such that $\gamma_{i_{k}} = \gamma_{m}$, or there are infinitely many edges $\gamma_{j_{k}}$, pair-wise different, such that $s(\gamma_{j_{k}}) \notin A$, for all $k \in \mathbb{N}$. 
\end{prop}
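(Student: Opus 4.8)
The plan is to prove both implications by analyzing when the shift images $\sigma^n(x)$ stay close to $A$ in the metric $d_X$. Throughout, recall from Lemma~\ref{lemata} that $A$ is an initial segment of $\sigma^{n}(x)$ exactly when $s(\sigma^n(x)) \in A$, i.e.\ when $s(\gamma_{n+1}) \in A$, and that in that case the distance $d_X(\sigma^n(x), A)$ is controlled entirely by the position in the enumeration of $\mathfrak{p}$ of a short segment beginning at the first edge $\gamma_{n+1}$ of $\sigma^n(x)$. The key dichotomy to keep in mind is that $d_X(\sigma^n(x), A)$ is bounded away from zero either because $A$ fails to be an initial segment of $\sigma^n(x)$ (which happens precisely when $s(\gamma_{n+1}) \notin A$), or because $A$ is an initial segment but the continuation edge $\gamma_{n+1}$ repeats a fixed edge $\gamma_m$ infinitely often, keeping the discriminating segment at a fixed early position in the enumeration.

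First I would prove the ``if'' direction (the two stated conditions are sufficient for the $\limsup$ to be positive). In the first case, suppose some edge $\gamma_m$ occurs as $\gamma_{i_k+1}$ for infinitely many indices $i_k$; then along this subsequence $\sigma^{i_k}(x)$ begins with the same edge $\gamma_m$, so the discriminating segment between $A$ and $\sigma^{i_k}(x)$ occupies a fixed position $p_j$ in the enumeration (one determined by $\gamma_m$), giving $d_X(\sigma^{i_k}(x), A) = 1/2^{j}$, a constant positive value, whence the $\limsup$ is at least $1/2^j > 0$. In the second case, suppose there are infinitely many indices $j_k$ with $s(\gamma_{j_k+1}) \notin A$ realized by pairwise-distinct edges; then $A$ is not an initial segment of $\sigma^{j_k}(x)$, so $A$ itself (a fixed path of fixed finite position in the enumeration) is an initial segment of $A$ but not of $\sigma^{j_k}(x)$, forcing $d_X(\sigma^{j_k}(x), A) \geq 1/2^{i_A}$ where $p_{i_A} = A$; again the $\limsup$ is bounded below by a fixed positive constant. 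A small point to handle carefully is the exact wording of the hypothesis (``edges $\gamma_{j_k}$ with $s(\gamma_{j_k}) \notin A$''); I would match it precisely to the index convention that $\sigma^n(x)$ starts at $\gamma_{n+1}$, but the essential mechanism is as above.

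Next I would prove the contrapositive of the ``only if'' direction: assuming both conditions fail, show $\limsup_{n} d_X(\sigma^n(x), A) = 0$, i.e.\ $d_X(\sigma^n(x), A) \to 0$. The negation of the first condition says that each fixed edge $\gamma_m$ appears only finitely often among the $\gamma_i$, so every edge is repeated at most finitely many times. The negation of the second condition says that only finitely many distinct edges $\gamma_j$ satisfy $s(\gamma_j) \notin A$; combined with the first negation, this means all but finitely many indices $n$ satisfy $s(\gamma_{n+1}) \in A$, so $A$ is an initial segment of $\sigma^n(x)$ for all large $n$. Now I would restrict to the (cofinite) subsequence $\{n_k\}$ of indices with $s(\sigma^{n_k}(x)) \in A$ and invoke Lemma~\ref{lemata} directly: the hypothesis of that lemma---that for each $m$ at most finitely many of the leading edges equal $\gamma_m$---is exactly the first negation, so Lemma~\ref{lemata} yields $d_X(\sigma^{n_k}(x), A) \to 0$. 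Since this subsequence is cofinite, the full sequence converges to $0$ and the $\limsup$ vanishes, completing the contrapositive.

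The main obstacle I anticipate is bookkeeping rather than deep difficulty: I must be precise about the index shift (the first edge of $\sigma^n(x)$ is $\gamma_{n+1}$, and it is this edge's source that must lie in $A$ for $A$ to be an initial segment) and about exactly which segment $p_{n'_k}$ governs the distance when $A$ is a genuine initial segment. The delicate case is reconciling the two failure modes so that they jointly force $A$ to be an initial segment for all large $n$ \emph{and} force the discriminating segments to march off to infinity in the enumeration; the cleanest route is to separate ``$A$ is not an initial segment'' (controlled by the second condition) from ``$A$ is an initial segment but the continuation stalls at a fixed edge'' (controlled by the first condition), and then feed the surviving subsequence into Lemma~\ref{lemata}, which packages the remaining estimate. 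No genuinely hard estimate is required beyond this careful case split.
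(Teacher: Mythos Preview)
Your proposal is correct and follows essentially the same route as the paper: both directions hinge on the same observations (the edge $\gamma_m$, respectively the set $A$, sits at a fixed position in the enumeration of $\mathfrak{p}$ and gives a uniform lower bound on the distance), and the forward implication is reduced to Lemma~\ref{lemata} in both cases. Two small remarks: in your first case you should write $d_X(\sigma^{i_k}(x),A)\geq 1/2^{j}$ rather than equality, since the least discriminating segment need not be $\gamma_m$ itself; and your organization of the forward direction as a full contrapositive (using that the subsequence with $s(\sigma^{n}(x))\in A$ is cofinite) is a clean variant of the paper's argument, which instead takes a subsequence realizing the $\limsup$ and applies Lemma~\ref{lemata} to derive a contradiction when condition~(b) fails.
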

\begin{proof}
Let $x = \gamma_{1}\gamma_{2}\ldots$ be an infinite path and $A$ be a minimal infinite emitter.

We prove the converse of the proposition first. 
Suppose that there exists infinitely many indices $i_k$, $k=1,2,\ldots$, such that $\gamma_{i_{k}} = \gamma_{m}$. Then, for each $k$, the first entry of $\sigma^{i_{k}-1}(x)$ is $ \gamma_{m}$. But, $\gamma_{m}$ is not an initial segment of $A$. Therefore, if $\gamma_{m}$ is the element $p_{j}$ in the enumeration of $\mathfrak{p}$, we obtain that
$$d_{X}(\sigma^{i_{k}-1}(x),A) \geq \dfrac{1}{2^{j}},$$ for all $k=1,2\ldots .$
Hence $\displaystyle{\limsup_{n \rightarrow \infty} d_{X}(\sigma^{n}(x), A) >0}$.
Now, suppose that there are infinitely many entries of $x$, pair-wise different, say $\{\gamma_{j_{k}}\}_{k \in \mathbb{N}}$, such that $s(\gamma_{j_{k}}) \notin A$. Then $A$ is not an initial segment of $\gamma_{j_{k}}$, for all $k$. Therefore, if $A$ is the element $p_{i}$ in the enumeration of $\mathfrak{p}$, we have that 
$$d_{X}(\sigma^{j_{k}-1}(x),A) \geq \dfrac{1}{2^{i}},$$
and hence $\displaystyle{\limsup_{n \rightarrow \infty} d_{X}(\sigma^{n}(x), A) >0}$ as desired.

Next we show the forward implication of the proposition. For this assume that $$\displaystyle{\limsup_{n \rightarrow \infty} d_{X}(\sigma^{n}(x), A) >0}$$ and, for all natural $m$, at most finitely many $\gamma_{k}$ satisfy $\gamma_{k} = \gamma_{m}$.
Let $\{\sigma^{m_{k}}(x)\}_{k \in \mathbb{N}}$ be a sequence such that $$\displaystyle{d_{X}(\sigma^{m_{k}}(x),A) \rightarrow \limsup_{n \rightarrow \infty} d_{X}(\sigma^{n}(x), A) > 0}.$$ 
If there are infinitely many $k$ for which $s(\sigma^{m_{k}}(x)) \in A$, then we can pass to a subsequence of $\{\sigma^{m_{k}}(x)\}_{k \in \mathbb{N}}$ and apply Lemma~\ref{lemata} to obtain a contradiction. Therefore only a finite number of $k$ are such that $s(\sigma^{m_{k}}(x)) \in A$. We conclude that there are infinitely many entries $\gamma_{j_{k}}$, pair-wise different, such that $s(\gamma_{j_{k}}) \notin A$ for all $k \in \mathbb{N}$ (since by assumption for all natural $m$ at most finitely many $\gamma_{k}$ satisfy $\gamma_{k} = \gamma_{m}$). 

\end{proof}

In the next proposition we partially characterize the asymptotic behavior of pairs of infinite sequences.

\begin{prop}\label{Teolimsup0}
Let $x = \gamma_{1}\gamma_{2}\gamma_{3}\ldots$ and $y = \beta_{1}\beta_{2}\beta_{3} \ldots$ be infinite paths. Then
 $$\displaystyle{\limsup_{n \rightarrow \infty}d_{X}(\sigma^{n}(x), \sigma^{n}(y)) >0}$$
if, and only if, there is a subsequence $\{i_k\}$ of the naturals and $\gamma_m$ (or $\beta_m$) such that $\gamma_{i_{k}} = \gamma_{m}$ (or $\beta_{i_{k}}=\beta_m$) and $\gamma_{i_{k}} \neq \beta_{i_{k}}$ for all $k \in \mathbb{N}$.
\end{prop}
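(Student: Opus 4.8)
The plan is to prove both implications by relating the quantity $\limsup_{n} d_X(\sigma^n(x),\sigma^n(y))$ to the existence of a single, enumeration-fixed element of $\mathfrak{p}$ that separates the shifted pairs infinitely often, and then to translate such an element into the combinatorial recurrence on the right-hand side. The main tool is that $d_X$ takes only the values $1/2^i$ (see (\ref{definmetricanova})), so that $\limsup_{n} d_X(\sigma^n(x),\sigma^n(y))>0$ is equivalent to the existence of a single index $i_0$ with $d_X(\sigma^n(x),\sigma^n(y))\geq 1/2^{i_0}$ for infinitely many $n$; in other words, for infinitely many $n$ one of the fixed paths $p_1,\dots,p_{i_0}$ is an initial segment of exactly one of $\sigma^n(x),\sigma^n(y)$.

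First I would dispatch the sufficiency direction. Assume a subsequence $\{i_k\}$ and, say, an edge $\gamma_m$ with $\gamma_{i_k}=\gamma_m$ and $\gamma_{i_k}\neq\beta_{i_k}$ for all $k$ (the $\beta_m$ case being symmetric). Setting $n_k:=i_k-1$, the first edge of $\sigma^{n_k}(x)$ is $\gamma_m$ while that of $\sigma^{n_k}(y)$ is $\beta_{i_k}\neq\gamma_m$. Hence the length-one path $(\gamma_m,r(\gamma_m))$, which occupies a fixed position $p_j$ in the enumeration of $\mathfrak{p}$, is an initial segment of $\sigma^{n_k}(x)$ but of no $\sigma^{n_k}(y)$; by (\ref{definmetricanova}) this forces $d_X(\sigma^{n_k}(x),\sigma^{n_k}(y))\geq 1/2^{j}$ for every $k$, giving $\limsup_{n} d_X(\sigma^n(x),\sigma^n(y))>0$.

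For necessity I would argue as in the proof of Proposition~\ref{teosupemm}. Since there are only finitely many candidates $p_1,\dots,p_{i_0}$ but infinitely many separating indices $n$, a pigeonhole yields a single $p=(\delta_1\dots\delta_L,D)\in\mathfrak{p}$ (with $L\geq 0$) and a subsequence $\{n_k\}$ along which $p$ is an initial segment of $\sigma^{n_k}(x)$ but not of $\sigma^{n_k}(y)$. Let $j_k\geq 1$ be the least index with $\gamma_{n_k+j_k}\neq\beta_{n_k+j_k}$. A short argument shows $j_k\leq L+1$: if the first $L+1$ edges of $\sigma^{n_k}(x)$ and $\sigma^{n_k}(y)$ coincided, then in particular $s(\gamma_{n_k+L+1})=s(\beta_{n_k+L+1})$, and since $p$ is an initial segment of $\sigma^{n_k}(x)$ we would get $s(\beta_{n_k+L+1})\in D$ together with $\beta_{n_k+i}=\delta_i$ for $i\leq L$, making $p$ an initial segment of $\sigma^{n_k}(y)$ as well, a contradiction. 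A second pigeonhole over the finite set $\{1,\dots,L+1\}$ fixes $j_k=j^\ast$ along a further subsequence. If $j^\ast\leq L$ we are done at once: the divergence edge in $x$ is the fixed edge $\delta_{j^\ast}$, so $\gamma_{n_k+j^\ast}=\delta_{j^\ast}$ recurs while $\gamma_{n_k+j^\ast}\neq\beta_{n_k+j^\ast}$, and $i_k:=n_k+j^\ast$ yields the required subsequence with $\gamma_m=\delta_{j^\ast}$.

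The hard part will be the remaining case $j^\ast=L+1$ (which includes $L=0$), where $\sigma^{n_k}(x)$ and $\sigma^{n_k}(y)$ agree on the entire prefix $\delta_1\dots\delta_L$ and separate only through the range set $D$, i.e. $s(\gamma_{n_k+L+1})\in D$ while $s(\beta_{n_k+L+1})\notin D$. Here the recurring prefix sits at \emph{agreement} positions and does not by itself certify the right-hand side, so one must instead produce a recurring edge at the disagreement position $n_k+L+1$. This is exactly the step where the range set enters, and where I expect the real difficulty to lie; the plan to close it is to control $D$ through the structure of $\mathcal{G}^0$ and Lemma~\ref{miniftyem} (when $D$ is a singleton the source $s(\gamma_{n_k+L+1})$ is pinned down; when $D$ is an infinite intersection $\bigcap_{e\in Y}r(e)$ over a finite $Y$, the finitely many relevant source vertices should, via a final pigeonhole, force an edge on the $\gamma$-side or the $\beta$-side to repeat at these positions), thereby delivering the desired subsequence in either the $\gamma_m$ or the $\beta_m$ form.
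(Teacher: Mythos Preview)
Your sufficiency argument and the pigeonhole reduction for necessity are sound, and you have correctly isolated the only delicate case: $j^{\ast}=L+1$, where the fixed separator $p=(\delta_1\dots\delta_L,D)$ distinguishes $\sigma^{n_k}(x)$ from $\sigma^{n_k}(y)$ solely through the range set $D$. However, your plan to close this case cannot work, and the obstruction is not technical but genuine.

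First, Lemma~\ref{miniftyem} is inapplicable here: it concerns \emph{minimal infinite emitters}, whereas $D$ is merely the second coordinate of an arbitrary element of $\mathfrak{p}$, i.e.\ an arbitrary member of $\mathcal{G}^0$. There is no reason for $D$ to be a singleton or a finite intersection of ranges, and even when $D$ pins down the source vertex, that vertex may emit infinitely many distinct edges, so no pigeonhole forces a recurring edge at the disagreement positions.

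Second, and decisively, the forward implication is \emph{false} as stated, so the hard case cannot be closed at all. Consider the graph with vertices $u,v$, loops $e_1,e_2,\ldots$ at $u$, and loops $f_1,f_2,\ldots$ at $v$. Put $x=e_1e_2e_3\ldots$ and $y=f_1f_2f_3\ldots$. Every edge of $x$ and of $y$ occurs exactly once, so the right-hand condition fails. Yet the length-zero ultrapath $\{u\}\in\mathcal{G}^0\subset\mathfrak{p}$ is an initial segment of $\sigma^n(x)$ and not of $\sigma^n(y)$ for every $n$; if $\{u\}=p_{j_0}$ in the chosen enumeration, then $d_X(\sigma^n(x),\sigma^n(y))\geq 1/2^{j_0}$ for all $n$, whence $\limsup_n d_X(\sigma^n(x),\sigma^n(y))>0$. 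This is precisely your $L=0$ subcase.

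For comparison, the paper's own proof asserts that the minimal separating ultrapath $p_{j_n}$ must have ``one of its entries'' equal to some $\gamma_{\ell_n}$ or $\beta_{\ell_n}$ at a disagreement index, and then concludes from the absence of recurring edges that $\{p_{j_n}:n\in\mathbb N\}$ is infinite. That assertion breaks down exactly when $p_{j_n}$ is a length-zero element of $\mathcal{G}^0$ (or when the separation comes only through the range set), which is the same obstruction you flagged. So the difficulty you identified is real and shared by the paper; the proposition needs an additional hypothesis, or a weaker conclusion, to rule out separators arising from $\mathcal{G}^0$.
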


\begin{proof}
Before we begin the proof we make an easy observation: if $x'=x'_1x'_2\ldots$ and $y'=y'_1 y'_2\ldots$ are infinite paths such that $x'_{k} \neq y'_{k}$, for some natural $k$, then for any $ n \geq m \geq k$, the segment $x'_{1}x'_{2}\ldots x'_{m}$ is not an initial segment of $y'_{1}y'_{2}\ldots y'_{n}$ and vice versa. In particular, if $x'_{1} \neq y'_{1}$ then no initial segment of $x'$ is an initial segment of $y'$ and vice versa.

Now we prove the converse of the proposition. Suppose that $x$ and $y$ are infinite paths such that there are infinitely many edges $\gamma_{i_{k}}$ such that $\gamma_{i_{k}} = \gamma_{m}$ and $\gamma_{i_{k}} \neq \beta_{i_{k}}$ for all natural $k$. Then, for every natural $k$, the first entry of $\sigma^{i_{k}-1}(x)$ is equal $\gamma_m$ and different from the first entry of $\sigma^{i_{k}-1}(y)$. Assume that the edge $\gamma_{m}$ appears in the enumeration of $\mathfrak{p}$ as $p_{j}$. Then $$d_{X}(\sigma^{i_{k}-1}(x),\sigma^{i_{k}-1}(y))\geq \frac{1}{2^{j}}, \text{ for all }k \in \mathbb{N},$$
and hence $\displaystyle{\limsup_{ n \rightarrow \infty}d_{X}(\sigma^{n}(a), \sigma^{n}(b)) >0}$. The case with infinitely many $\beta_{i_{k}}$ equal to a $\beta_m$ is analogous.

For the forward implication, let $x=\gamma_1 \gamma_2 \ldots$ and $y=\beta_1 \beta_2 \ldots$ be infinite paths such that $\displaystyle{\limsup_{n \rightarrow \infty} d_{X}(\sigma^{n}(x), \sigma^{n}(y)) >0}$. Then, the set $$L = \{\ell: \gamma_{\ell} \neq \beta_{\ell}\}$$ is infinite. 
Assume that there is no constant subsequence of $x$ neither of $y$.  
As $L$ is infinite, for each natural $n$ there is an index $\ell_{n} \in L$, and a finite path $p_{j_{n}}$ (in the enumeration of $\mathfrak{p}$), such that one of the entries of $p_{j_{n}}$ is $ \gamma_{\ell_{n}}$ (or $\beta_{\ell_{n}}$) and:
$$d_{X}(\sigma^{n}(x),\sigma^{n}(y)) = \dfrac{1}{2^{j_{n}}}.$$
%and for each edge $e$ the sets $\{x_{\ell_{n}}: n \in \mathbb{N}\} \cap \{x_{j}: x_{j} = e\}$ and $\{y_{\ell_{n}}: n \in \mathbb{N}\} \cap \{y_{j}: b_{j} = e\}$ are both finite. 
By our assumption (that there is no constant subsequence of $x$ neither of $y$) the set $\{ p_{j_{n}}: n \in \mathbb{N}\}$ is infinite and hence $\dfrac{1}{2^{j_{n}}} \rightarrow 0$, what contradicts the inequality $\displaystyle{\limsup_{n \rightarrow \infty} d_{X}(\sigma^{n}(x), \sigma^{n}(y)) >0}$. So, there is at least an edge $\gamma_{m}$ (or $\beta_{m}$) and infinitely many indices $\{i_{k}\}_{k \in \mathbb{N}}$
such that $\gamma_{i_{k}} = \gamma_{m}$ (or $\beta_{i_{k}} = \beta_{m}$) and $\gamma_{i_{k}} \neq \beta_{i_{k}}$ for all $k \in \mathbb{N}$, as desired.

\end{proof}

In the next proposition we continue to characterize the asymptotic behavior of pairs consisting of an infinite sequence and an infinite emitter.

\begin{prop}\label{teoinfemm}
Let $x = \gamma_{1}\gamma_{2}\ldots \in \mathfrak{p}^{\infty}$ be an infinite path and $ A \in \mathfrak{p}^{0}$ a minimal infinite emitter.
Then, 
$$\displaystyle{\liminf_{n \rightarrow \infty} d_{X}(\sigma^{n}(x), A) =0,}$$
if, and only if, there exists infinitely many indices $\{j_{1}, j_{2}, \ldots\}$ such that $\gamma_{j_{k}}\neq \gamma_{j_{\ell}}$ for all $k \neq \ell$, and $s(\gamma_{j_{k}}) \in A$ for all $k \in \mathbb{N}$.
\end{prop}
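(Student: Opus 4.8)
\noindent
The plan is to express the condition $\liminf_{n\to\infty} d_X(\sigma^n(x),A)=0$ in terms of which finite ultrapaths are initial segments of the shifted sequences $\sigma^n(x)=\gamma_{n+1}\gamma_{n+2}\ldots$, and then read the characterization off from the definition of the metric in (\ref{definmetricanova}). First I would record a few elementary facts. Since $A$ has length zero, $A$ is an initial segment of $\sigma^n(x)$ exactly when $s(\gamma_{n+1})\in A$, and in that case every length-zero initial segment of $A$ is automatically an initial segment of $\sigma^n(x)$. Writing $A=p_i$ in the fixed enumeration of $\mathfrak{p}$, this gives two lower bounds: if $s(\gamma_{n+1})\notin A$ then $A$ is an initial segment of itself but not of $\sigma^n(x)$, so $d_X(\sigma^n(x),A)\geq 1/2^i$; and if $\gamma_{n+1}=\gamma_m$ for a fixed edge $\gamma_m$, then the length-one path $(\gamma_m,r(\gamma_m))$, say $p_{j^*}$ in the enumeration, is an initial segment of $\sigma^n(x)$ (because $s(\gamma_{n+2})\in r(\gamma_m)$) but never of $A$, so $d_X(\sigma^n(x),A)\geq 1/2^{j^*}$.

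For the backward implication I would argue directly from Lemma~\ref{lemata}. Given infinitely many pairwise distinct edges $\gamma_{j_k}$ with $s(\gamma_{j_k})\in A$, put $n_k:=j_k-1$, so that $s(\sigma^{n_k}(x))=s(\gamma_{j_k})\in A$ and the leading edges $\gamma_{n_k+1}=\gamma_{j_k}$ of the $\sigma^{n_k}(x)$ are pairwise distinct; in particular at most one of them equals any given $\gamma_m$. Lemma~\ref{lemata} then yields $d_X(\sigma^{n_k}(x),A)\to 0$, whence $\liminf_{n\to\infty} d_X(\sigma^n(x),A)=0$.

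For the forward implication I would start from a subsequence $\sigma^{m_k}(x)$ realizing the liminf, so $d_X(\sigma^{m_k}(x),A)\to 0$, and extract the required edges in two steps. By the first lower bound above, $d_X(\sigma^{m_k}(x),A)<1/2^i$ for large $k$ forces $s(\gamma_{m_k+1})\in A$. By the second lower bound, no fixed edge $\gamma_m$ can satisfy $\gamma_{m_k+1}=\gamma_m$ for infinitely many $k$, since that would keep $d_X(\sigma^{m_k}(x),A)\geq 1/2^{j^*}$ along a sub-subsequence. Hence no edge value repeats infinitely often among $\{\gamma_{m_k+1}\}$ for large $k$, so by pigeonhole this family contains infinitely many pairwise distinct edges; together with $s(\gamma_{m_k+1})\in A$, these are the desired indices $j_k$.

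The step I expect to be most delicate is the backward direction, which is exactly where the minimality of $A$ is used (through Lemma~\ref{lemata}): even though the leading edges $\gamma_{j_k}$ are pairwise distinct and push the distinguishing length-$\geq 1$ segment arbitrarily far out in the enumeration, one must rule out a fixed length-zero set $C\in\mathcal{G}^0$ with $A\not\subseteq C$ that nonetheless contains $s(\gamma_{j_k})$ for infinitely many $k$, as such a $C$ would produce a fixed positive lower bound. Minimality of $A$ forbids this, because $A\cap C$ would then be a proper subset of $A$ in $\mathcal{G}^0$ emitting the infinitely many distinct edges $\gamma_{j_k}$, contradicting that $A$ is a minimal infinite emitter. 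Verifying that this is the only obstruction to $d_X(\sigma^{n_k}(x),A)\to 0$ is the heart of the matter, and it is already packaged in Lemma~\ref{lemata}.
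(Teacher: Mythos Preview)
Your argument is correct and follows essentially the same route as the paper's: the forward direction uses the same two lower bounds (from $A=p_i$ and from a fixed leading edge) to force $s(\gamma_{m_k+1})\in A$ with no edge repeated infinitely often, and the backward direction is the paper's direct computation repackaged as an appeal to Lemma~\ref{lemata}. Your final paragraph is actually more explicit than the paper about why minimality of $A$ is needed to rule out a fixed length-zero $C\in\mathcal{G}^0$ as the distinguishing segment, which is a point the paper's proof (and the proof of Lemma~\ref{lemata}) glosses over.
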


\begin{proof}

To prove the converse of the proposition, suppose that there are infinitely many pair-wise different edges $\{\gamma_{j_{1}},\gamma_{j_{2}},\ldots\}$ such that $s(\gamma_{j_{i}}) \in A$ for all $i$. Then, for all natural $k$, the first entry of $\sigma^{j_{k}-1}(x)$ is $\gamma_{j_{k}}$. Since $s(\gamma_{j_{k}}) \in A$ for all $k$, $A$ is an initial segment of $\sigma^{j_{k}-1}(x)$, but no initial segment of $\sigma^{j_{k}-1}(x)$ is an initial segment of $A$. Therefore there is an element $p_{n_{k}}$ in the enumeration of $\mathfrak{p}$ such that its first entry is $\gamma_{j_{k}}$ and 
$$d_{X}\left(\sigma^{j_k-1}(x), A\right) = \frac{1}{2^{n_{k}}}.$$
Now, as $\{\gamma_{j_{k}}: k \in \mathbb{N}\}$ is an infinite set formed by pair-wise distinct elements, we have that $\{p_{n_{k}}: k \in \mathbb{N}\}$ is an infinite set formed by pair-wise distinct elements too. Hence $n_{k} \rightarrow \infty$ when $k \rightarrow \infty$, and so $\displaystyle{\liminf_{n \rightarrow \infty} d_{X}(\sigma^{n}(x), A) =0}$ as we wanted. 

For the forward implication of the proposition, first observe that if $x'$ is an infinite path such that $s(x') \notin A$, and $A$ is the element $p_{j}$ in the enumeration of $\mathfrak{p}$, then $A$ is not an initial segment of $x'$ and hence $d_{X}(x', A) \geq \frac{1}{2^{j}}.$ Furthermore, if $x^{n} = \gamma^{n}_{1}\gamma^{n}_{2} \ldots$ is a sequence of infinite paths such that
$\gamma^{n}_{1} = \gamma^{1}_{1}$ for all natural $n$, and $\gamma^{1}_{1}$ is the element $p_{j'}$ in the enumeration of $\mathfrak{p}$, then $d_{X}(x^{n}, A) \geq \min\left\{\frac{1}{2^{j}};\frac{1}{2^{j'}}\right\}$, for all natural $n$.

Finally, suppose that $\displaystyle{\liminf_{n \rightarrow \infty}d_{X}(\sigma^{n}(x), A) = 0}$ and let $\sigma^{n_{k}}(x)$ be a subsequence such that $d_{X}(\sigma^{n_{k}}(x),y) \rightarrow 0$. Denote by $\gamma^{k}_{1}$ the first edge of $\sigma^{n_{k}}(x)$, for all natural $k$. By the observations in the previous paragraph, we have that the sequence $\{\gamma^{k}_{1}\}_{k \in \mathbb{N}}$ has no constant subsequence and $s(\gamma^{k}_{1}) \in A$ for all, except possibly for finite many, $\gamma^{k}_{1}$. Therefore there exists a natural $K$ such that $s(\gamma^{k_{1}}_{1}) \in A$ and  $\gamma^{k_{1}}_{1}\neq \gamma^{k_{2}}_{1}$, for all distinct $k_{1},k_{2} \geq K$.   

\end{proof}

Next we prove the last proposition related to the asymptotic behavior of pairs of infinite paths.

\begin{prop}\label{Teoinf0}
Let $x = \gamma_{1}\gamma_{2}\gamma_{3}\ldots$ and $y = \beta_{1}\beta_{2}\beta_{3}\ldots$ be infinite paths. Then
$$\displaystyle{\liminf_{n \rightarrow \infty} d_{X}(\sigma^{n}(x),\sigma^{n}(y)) = 0}$$
if, and only if, one of the three conditions below is verified:

\begin{enumerate}
\item $\#\{e \in \G^1: \exists i \in \mathbb{N} \text{ such that }  \gamma_{i} = e = \beta_{i}\} = \infty$;
\item There is a subsequence $\{n_{k}\}_{k \in \mathbb{N}}$ with the following property: for all natural $N$, there is $K$ such that $k \geq K$ implies $\sigma^{n_{k}}(x)$ and $\sigma^{n_{k}}(y)$ agree in the first $N$ entries;
\item There are infinitely many indices $\{n_{k}\}_{k \in \mathbb{N}}$ such that $\gamma_{n_{k}} \neq \beta_{n_{k}}$ and $\gamma_{n_{k_{1}}} \neq \gamma_{n_{k_{2}}}$, $\beta_{n_{k_{1}}} \neq \beta_{n_{k_{2}}}$ for all $k_{1} \neq k_{2}$.
\end{enumerate}
\end{prop}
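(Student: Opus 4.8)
The plan is to prove the two implications separately, and the whole argument rests on one principle already exploited in Propositions~\ref{teosupemm}--\ref{teoinfemm}: for infinite paths $u,v$ the quantity $d_X(u,v)=1/2^{j}$ is controlled by the \emph{earliest} element $p_j$ of the enumeration of $\mathfrak{p}$ that is an initial segment of exactly one of $u,v$ (see (\ref{definmetricanova}) and Remark~\ref{ordem}). Consequently, $d_X(\sigma^{n_k}(x),\sigma^{n_k}(y))\to 0$ holds along a subsequence if and only if the index of this earliest distinguishing path tends to infinity; equivalently, and now independently of the chosen enumeration, if and only if for every finite family $S\subseteq\mathfrak{p}$ the paths $\sigma^{n_k}(x)$ and $\sigma^{n_k}(y)$ eventually have the same initial-segment status on every $p\in S$. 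I would record this reformulation first, since it is what makes the enumeration-free conditions~(1)--(3) tractable.

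For the sufficiency of each condition I would argue directly. If (2) holds, then along $\{n_k\}$ the two shifted paths share prefixes of unbounded length, hence agree on every fixed finite family of paths eventually, so $d_X(\sigma^{n_k}(x),\sigma^{n_k}(y))\to 0$. For (1), choosing positions $i_k$ with $\gamma_{i_k}=\beta_{i_k}=e_k$ and the $e_k$ pairwise distinct, the paths $\sigma^{i_k-1}(x)$ and $\sigma^{i_k-1}(y)$ start with the same edge $e_k$ (in particular with the same source), so any path distinguishing them has first edge $e_k$; since the $e_k$ are distinct, no fixed path can be the distinguisher more than finitely often, and the distance tends to $0$. Condition~(3) is handled in the same spirit at the positions $n_k$, where $\sigma^{n_k-1}(x)$ and $\sigma^{n_k-1}(y)$ begin with the distinct edges $\gamma_{n_k}$ and $\beta_{n_k}$; since both families are pairwise distinct, every edge-path separating them escapes to infinity in the enumeration, the one delicate point being that no shorter path should separate them earlier (addressed below).

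For necessity I would assume $\liminf=0$, fix a subsequence $\{n_k\}$ with $d_X(\sigma^{n_k}(x),\sigma^{n_k}(y))\to 0$, and analyse the first coordinate at which $\sigma^{n_k}(x)$ and $\sigma^{n_k}(y)$ disagree. If these first-disagreement positions are unbounded along a further subsequence, then the two paths agree on arbitrarily long prefixes and condition~(2) holds. Otherwise the first disagreement occurs at a fixed coordinate $t$ for infinitely many $k$, so the pair shares a length-$(t-1)$ prefix and splits at coordinate $t$. Since the earliest distinguishing index must nonetheless diverge, this can happen only if the common prefixes run through infinitely many distinct edges, giving condition~(1) read at the shared coordinates, or if the splitting edges $\gamma_{n_k+t}$ and $\beta_{n_k+t}$ each take infinitely many distinct values, giving condition~(3); were both alternatives to fail, a single short path would recur as the distinguisher and bound the distance away from $0$.

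The main obstacle I anticipate is the gap between exhibiting \emph{some} distinguishing path and controlling the \emph{earliest} one. Producing an edge-path with a fresh first edge only yields an upper bound for the distance; to conclude that the distance actually tends to $0$ one must rule out that a shorter segment --- most delicately a length-zero vertex set in $\mathcal{G}^0$ separating the two initial vertices --- serves as the earliest distinguisher infinitely often. Excluding this is precisely the subtle step both in the sufficiency of~(3) and in the ``bounded first-disagreement'' case of the necessity argument, and it is where the interplay between the fixed enumeration of $\mathfrak{p}$ and the structure of $\mathcal{G}^0$ has to be used with care.
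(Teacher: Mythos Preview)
Your plan mirrors the paper's: handle the three conditions separately for sufficiency, and for necessity take a subsequence along which $d_X\to 0$ and case on whether the first-disagreement coordinate stays bounded. The paper organises necessity as ``assume not-(1) and not-(2), derive (3)'', which is the same decomposition read contrapositively.

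Your worry about length-zero distinguishers in the sufficiency of~(3) is well placed --- in fact sharper than the paper, which simply asserts that the earliest distinguishing ultrapath $p_{j_k}$ has first edge $\gamma_{n_k}$ or $\beta_{n_k}$, tacitly assuming $|p_{j_k}|\ge 1$. But Condition~(3) does not force $s(\gamma_{n_k})=s(\beta_{n_k})$, and when the sources differ a set $A\in\mathcal{G}^0$ may separate the shifted paths before any edge-path does. Concretely: take vertices $v\ne w$, edges $e_i$ with $s(e_i)=v$ and $f_i$ with $s(f_i)=w$, all with range $\{v,w\}$, and set $x=e_1e_2e_3\cdots$, $y=f_1f_2f_3\cdots$. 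Condition~(3) holds at every index, yet $\{v\}\in\mathcal{G}^0$ is an initial segment of every $\sigma^n(x)$ and of no $\sigma^n(y)$, so $d_X(\sigma^n(x),\sigma^n(y))\ge 1/2^{j}$ where $p_j=\{v\}$, and the $\liminf$ is positive. So this is not a point that can be handled ``with care'': as stated, (3) alone does not imply $\liminf=0$, and neither your sketch nor the paper's argument closes the gap (the paper's later results only invoke the sufficiency of~(2) and the necessity direction, so they are unaffected). One over-correction on your side: the length-zero issue does \emph{not} affect the necessity direction. There you are \emph{given} $d_X\to 0$, so every distinguisher --- of whatever length --- already has divergent index; you may then freely restrict attention to the edge-path distinguishers when extracting~(3), and no control over $\mathcal{G}^0$ is needed.
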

\begin{proof}

We prove the converse first. Suppose that Condition~1 is satisfied. Then, there are subsequences
$\{\sigma^{n_{k}}(x)\}_{k \in \mathbb{N}}$ and $\{\sigma^{n_{k}}(y)\}_{k \in \mathbb{N}}$ such that $\sigma^{n_{k}}(x)_{1} = \sigma^{n_{k}}(y)_{1} = \gamma^{n_{k}}_{1}$ and $\gamma^{n_{k}}_{1} \neq \gamma^{n_{\ell}}_{1}$ for all $k \neq \ell$.  Let $p_{j_{k}}$ in the enumeration of $\mathfrak{p}$ be such that $d_{X}(\sigma^{n_{k}}(x),\sigma^{n_{k}}(y)) = \dfrac{1}{2^{p_{j_{k}}}}$. Notice that the first entry of each $p_{j_{k}}$ is $\gamma^{n_{k}}_{1}$. Since the $\{\gamma^{n_{k}}_{1}: k \in \mathbb{N}\}$ is an infinite set formed by pair-wise distinct elements, the set $\{p_{j_{k}}: k \in \mathbb{N}\}$ is an infinite set formed by pair-wise distinct elements too. So, we conclude that $\displaystyle{\liminf_{n \rightarrow \infty}d_{X}(\sigma^{n}(x),\sigma^{n}(y)) = 0}$.

Next, we assume that Condition~2 holds. Fix $N \in \mathbb{N}$. Since the enumeration of $\mathfrak{p}$ is fixed, there is a natural $N'$ such that if $p_{j}$ is an element in $\mathfrak{p}$ with length greater than $N'$, then $j > N$. By Condition~2 (for $N'$), there is a natural $K$ such that for all $k \geq K$, $\sigma^{n_{k}}(x)$ and $\sigma^{n_{k}}(y)$ have the same $N'$ entries. Hence, for all $k\geq K$, any initial segment (say $p_j$) of one of $\sigma^{n_{k}}(x)$ or $\sigma^{n_{k}}(y)$, but not the other, must be such that $j > N$. Therefore, for $k \geq K$, we have that
$$d_{X}(\sigma^{n_{k}}(x),\sigma^{n_{k}}(y)) < \dfrac{1}{2^{N}}.$$
As $N$ was arbitrary, we finished this part.

Suppose that the Condition 3 holds. Then, there are subsequences $\{\sigma^{n_{k}}(x)\}_{k \in \mathbb{N}}$ and $\{\sigma^{n_{k}}(y)\}_{k \in \mathbb{N}}$ such that 
$\sigma^{n_{k}}(x)_{1} = \gamma^{n_{k}}_{1}$, $\sigma^{n_{k}}(y)_{1} = \beta^{n_{k}}_{1}$,
$\gamma^{n_{k}}_{1} \neq \beta^{n_{k}}_{1}$ and $\gamma^{n_{k_{1}}}_{1} \neq \gamma^{n_{k_{2}}}_{1}$, $\beta^{n_{k_{1}}}_{1} \neq \beta^{n_{k_{2}}}_{1}$ for all $k_{1} \neq k_{2}$ and $k \in \mathbb{N}$. Let $p_{j_{k}} \in \mathfrak{p}$ be such that $d_{X}(\sigma^{n_{k}}(x),\sigma^{n_{k}}(y)) = \dfrac{1}{2^{j_{k}}}.$ Notice that the first edge of $p_{j_{k}}$ is either $\gamma^{n_{k}}_{1}$ or $\beta^{n_{k}}_{1}$. Since $\{\gamma^{n_{k}}_{1}: k \in \mathbb{N}\}$ and $\{\beta^{n_{k}}_{1}: k \in \mathbb{N}\}$ are both infinite sets formed by pair-wise distinct elements, we infer the same for the set $\{p_{j_{k}}: k \in \mathbb{N}\}$. So we conclude that $d_{X}(\sigma^{n_{k}}(x),\sigma^{n_{k}}(y))\rightarrow 0$, and hence $\displaystyle{\liminf_{n \rightarrow \infty}d_{X}(\sigma^{n}(x),\sigma^{n}(y)) = 0}$.

Now we prove the forward implication of the proposition. Suppose that $$\displaystyle{\liminf_{n \rightarrow \infty} d_{X}(\sigma^{n}(x),\sigma^{n}(y)) = 0},$$ and, additionally, suppose that Conditions~1 and 2 are not satisfied. We will show that, under these hypothesis, Condition~3 holds.

Let $\{m_{k}\}_{k \in \mathbb{N}}$ be a subsequence such that $d_{X}(\sigma^{m_{k}}(x),\sigma^{m_{k}}(y)) \rightarrow 0$. As Condition~2 is not verified, passing to a subsequence of $\{m_{k}\}_{k \in \mathbb{N}}$ if necessary, we have that there is $\ell \in \mathbb{N}$, $\ell\geq 0$, such that, for all natural $k$, $\sigma^{m_{k}}(x)$ and $\sigma^{m_{k}}(y)$ are identical in the first $\ell$ entries and are different in the $(\ell+1)$-th entry.

Now,  let $s_{k}$ be the initial segment of length $\ell$ of $\sigma^{m_{k}}(x)$ (and of $\sigma^{m_{k}}(y)$ too, as $\sigma^{m_{k}}(y)$ agrees with $\sigma^{m_{k}}(x)$ in the first $\ell$ entries). As Condition~1 is not satisfied, the set $\{s_{k}: k \in \mathbb{N} \}$ is finite. Denote by $\gamma^{n_{k}}_{\ell + 1}$ and $\beta^{n_{k}}_{\ell + 1}$, respectively, the $(\ell+1)$-th entry of $\sigma^{m_{k}}(x)$ and $\sigma^{m_{k}}(y)$ (so $\gamma^{n_{k}}_{\ell + 1} \neq \beta^{n_{k}}_{\ell + 1}$), and suppose that  $s_{k}\gamma^{n_{k}}_{\ell + 1}$ and $s_{k}\beta^{n_{k}}_{\ell + 1}$ are in the positions $j_{(k,x)}$ and $j_{(k,y)}$ in the enumeration of $\mathfrak{p}$, respectively. Then
$$\max\left\{\frac{1}{2^{j_{(k,x)}}};\frac{1}{2^{j_{(k,y)}}}\right\} \leq d_{X}(\sigma^{m_{k}}(x),\sigma^{m_{k}}(y))\rightarrow 0, $$
and hence we conclude that $\gamma^{n_{k_{1}}}_{\ell + 1} \neq\gamma^{n_{k_{2}}}_{\ell + 1}$ and $\beta^{n_{k_{1}}}_{\ell + 1} \neq  \beta^{n_{k_{2}}}_{\ell + 1}$ for all $k_{1} \neq k_{2}$ except possibly for finitely many pairs $k_{1} \neq k_{2}$. Therefore Condition~3 is verified.
\end{proof}

We are now ready to prove our main results. In Theorem~\ref{teothebest} below we construct an uncountable scrambled set ($S'_{\alpha}$) in the shift space $\{0,1\}^{\mathbb{N}}$ (provided it can be embedded in the ultragraph shift space). Also, we describe the set $S'_\alpha$, by showing that such set is not closed and its closure, $\overline{S'_{\alpha}}$, is not a scrambled set. At the end of the proof, we extract a perfect subset of $S'_{\alpha}$.

Before we proceed we need the following definition regarding ultragraphs.

\begin{defin}\label{m,n}
Let $\mathcal{G}$ be an ultragraph.  A \textit{closed path based at the vertex $v$} is a finite path $e_{1}e_{2}\ldots e_{k}$ such that $v = s(e_{1}) \in r(e_{k})$ and $s(e_{i}) \neq v$ for all $i > 1$. We denote by $CP_{\mathcal{G}}(v)$ the set of all closed paths in $\mathcal{G}$ based at $v$.

%$n$-\textit{cycle}, $n \geq 1$, is a finite set of different edges $\{e_{1},e_{2}, 
%\dots, e_{n}\}$ such that $s(e_{k+1}) \in r(e_{k})$ for all $k \in \{1,2,\ldots,n-1\}$ and $s(e_{1}) \in r(e_{n})$. An $m,n$-\textit{cycle}, $m,n \geq 1$, is a finite set of edges, $\{e_{1},e_{2}, \dots, e_{m},e'_{1},e'_{2},\ldots,e'_{n}\}$ such that:
%\begin{enumerate}
 %   \item $\{e_{1},e_{2}, \dots, e_{m}\}$ and $\{e'_{1},e'_{2},\ldots,e'_{n}\}$ are an $m$ and an $n$-cycle, respectively;
  %  \item there are two pairs of edges  $(e_{k_{1}},e'_{k_{2}})$ and % $(e_{\ell_{1}},e'_{\ell_{2}})$ such that, $k_{1} \leq \ell_{1}$, $k_{2} \leq \ell_{2}$,  $e'_{k_{2}} \notin \{e_{1},e_{2}, \dots, e_{m}\}$,  $s(e'_{k_{2}}) \in r(e_{k_{1}})$ and $s(e_{\ell_{1}})\in r(e'_{\ell_{2}})$  where $k_{1}, \ell_{1} \in \{1,2,\ldots,m\}$ and $k_{2}, \ell_{2} \in \{1,2,\ldots,n\}$.
%\end{enumerate}   
\end{defin}
 
 \begin{obs} Notice that in the language of Leavitt path algebras what we are calling closed path based at vertex $v$ is known as closed simple path based at vertex $v$, see \cite{AAS}.
 \end{obs}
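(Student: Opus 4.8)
The plan is to establish the asserted terminological equivalence by a direct comparison of Definition~\ref{m,n} with the notion of a closed simple path based at $v$ as it appears in the Leavitt path algebra literature \cite{AAS}. Since the statement is a matter of reconciling two vocabularies, the ``proof'' consists of unwinding the relevant definitions and checking that the defining conditions coincide clause by clause.

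First I would recall the definitions used in \cite{AAS}. In the Leavitt path algebra setting over a directed graph, a \emph{closed path based at $v$} is a path $\mu = e_{1} \cdots e_{k}$ satisfying $s(e_{1}) = v = r(e_{k})$; such a closed path is called \emph{simple} when, in addition, $s(e_{i}) \neq v$ for every $i \in \{2, \ldots, k\}$, i.e.\ the path returns to its base vertex only at the very end. Thus ``closed simple path based at $v$'' is exactly the conjunction of the return condition $s(e_{1}) = v = r(e_{k})$ with the no-intermediate-return condition $s(e_{i}) \neq v$ for $i > 1$.

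Next I would account for the single structural difference between graphs and ultragraphs, namely that in an ultragraph the range map takes values in $P(G^{0}) \setminus \{\emptyset\}$ rather than in $G^{0}$. The correct transcription of the return condition $r(e_{k}) = v$ to the ultragraph setting is $v \in r(e_{k})$, which is forced by the concatenation rule for finite paths ($s(e_{i+1}) \in r(e_{i})$): a path can re-enter the vertex $v$ only through $v \in r(e_{k})$. This is precisely the clause $v = s(e_{1}) \in r(e_{k})$ appearing in Definition~\ref{m,n}. Matching the remaining clause is immediate, since the requirement $s(e_{i}) \neq v$ for all $i > 1$ in Definition~\ref{m,n} is verbatim the simplicity condition of \cite{AAS}. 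Hence the conjunction of the two conditions defining the authors' closed path based at $v$ is exactly ``closed path'' (return condition) together with ``simple'' (no intermediate return), proving the equivalence.

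I expect the only point requiring care to be the ultragraph adaptation of the range condition, since $r(e_{k})$ is a set rather than a single vertex; once one confirms that $v \in r(e_{k})$ is the faithful rendering of ``the path returns to $v$'' in this context, the identification is purely a relabeling and no further argument is needed.
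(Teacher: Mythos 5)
Your definitional unwinding is correct and matches the paper's (implicit) justification exactly: the remark carries no proof beyond the comparison of Definition~\ref{m,n} with the notion of closed simple path in \cite{AAS}, and your clause-by-clause check, including the observation that $v \in r(e_k)$ is the faithful ultragraph rendering of $r(e_k) = v$, is precisely that comparison. Nothing further is needed.
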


\begin{teo}\label{teothebest}
Let $\G$ be an ultragraph and suppose that there exists a vertex $v$ such that $\#CP_{\mathcal{G}}(v) \geq 2$. Then the associated shift space $X$ is Li-Yorke chaotic. Furthermore, $X$ contains an uncountable scrambled set which is perfect and compact, as well as an uncountable scrambled set that is not closed and whose closure is not a scrambled set.
\end{teo}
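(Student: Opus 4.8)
The plan is to reduce the whole statement to the classical full two-shift on $\{0,1\}^{\mathbb{N}}$ and then carry a scrambled set back into $X$ by concatenating loops based at $v$. Since $\#CP_{\mathcal{G}}(v)\geq 2$, fix two distinct closed paths $a,b\in CP_{\mathcal{G}}(v)$. To neutralize the misalignment produced by loops of different lengths, I would not use $a,b$ directly but the equal-length blocks $a^{|b|}$ and $b^{|a|}$, both of length $L:=|a|\,|b|$, both starting at $v$ and having $v$ in their range, and distinct because $a\neq b$. As $v\in r(a^{|b|})\cap r(b^{|a|})$ and $s(a^{|b|})=s(b^{|a|})=v$, any one-sided concatenation of these two blocks is a genuine element of $\mathfrak{p}^{\infty}$, so sending the coordinate value $0$ to $a^{|b|}$ and the value $1$ to $b^{|a|}$ defines a map $\Phi:\{0,1\}^{\mathbb{N}}\to X$. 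Using Proposition~\ref{convseq}(a) I would verify that $\Phi$ is a homeomorphism onto its image; since only the finitely many edges occurring in $a$ and $b$ are ever used, a convergent subsequence of sources yields a convergent subsequence of paths, so $\Phi(\{0,1\}^{\mathbb{N}})$ is a \emph{compact} subset of $X$.

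Next I would transfer scrambledness through $\Phi$. Because every block has the same length $L$, the $k$-th block of $\Phi(\omega)$ occupies edge-positions $(k-1)L+1,\dots,kL$ for all $\omega$, so block boundaries of $\Phi(\omega)$ and $\Phi(\omega')$ are always aligned. If $\omega,\omega'$ differ in infinitely many coordinates, then at infinitely many aligned positions the two paths carry the distinct blocks $a^{|b|}$ and $b^{|a|}$; by the pigeonhole principle a single edge (the edge at their first point of disagreement) occurs in one path at infinitely many positions where the other has a different edge, so Proposition~\ref{Teolimsup0} gives $\limsup_{n}d_X(\sigma^{n}\Phi(\omega),\sigma^{n}\Phi(\omega'))>0$. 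If, moreover, $\omega,\omega'$ share arbitrarily long aligned common words, then $\Phi(\omega),\Phi(\omega')$ agree on arbitrarily long aligned runs of edges, and shifting to the start of such a run verifies Condition~2 of Proposition~\ref{Teoinf0}, giving $\liminf_{n}d_X(\sigma^{n}\Phi(\omega),\sigma^{n}\Phi(\omega'))=0$. Hence it is enough to build, inside the two-shift, a set whose distinct members pairwise differ infinitely often and pairwise share arbitrarily long aligned common words.

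For the perfect and compact scrambled set I would use the spacer encoding $w:\{0,1\}^{\mathbb{N}}\to\{0,1\}^{\mathbb{N}}$ whose $k$-th block is the run $0^{k}$ followed by the prefix $\theta_{1}\cdots\theta_{k}$, that is $w(\theta)=0^{1}\theta_{1}\,0^{2}\theta_{1}\theta_{2}\,0^{3}\theta_{1}\theta_{2}\theta_{3}\cdots$. The fixed spacer runs $0^{k}$ force arbitrarily long aligned common words, hence $\liminf=0$, while the repetition of each coordinate of $\theta$ in all later blocks forces distinct $\theta$ to produce sequences differing infinitely often, hence $\limsup>0$; moreover $w$ is continuous and injective. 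Thus $W:=w(\{0,1\}^{\mathbb{N}})$ is an uncountable scrambled subset of the two-shift, homeomorphic to the Cantor set, and $\Phi(W)$ is an uncountable, compact, perfect, scrambled subset of $X$. In particular $X$ is Li-Yorke chaotic, which also settles the existence of the perfect and compact scrambled set.

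Finally, for a scrambled set that is not closed and whose closure is not scrambled, I would enlarge $W$ by a countable family. Every element of $W$ begins with $0$, so $1\,0^{\infty}$ lies outside $W=\overline{W}$; I would adjoin sequences $q^{(m)}$ obtained by prefixing a single $1$ to suitable elements $w(\theta^{(m)})$ with $\theta^{(m)}\to 0^{\infty}$, so that $q^{(m)}\to 1\,0^{\infty}$ while their long zero-runs stay essentially aligned with the spacer runs coming from $W$. Setting $S':=\Phi(W)\cup\{\Phi(q^{(m)}):m\in\mathbb{N}\}$, the set $S'$ is uncountable and scrambled, it is not closed because $\Phi(1\,0^{\infty})\in\overline{S'}\setminus S'$, and its closure is not scrambled because $\Phi(0^{\infty})\in\Phi(W)\subseteq\overline{S'}$ while $(0^{\infty},1\,0^{\infty})$ satisfies $\sigma^{n}(0^{\infty})=\sigma^{n}(1\,0^{\infty})$ for $n\geq 1$ and therefore fails condition~(i) of a scrambled pair; the Cantor set $\Phi(W)\subseteq S'$ provides the promised perfect subset. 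I expect the main obstacle to be exactly this last step: one must check that adjoining the approximating points keeps \emph{every} pair scrambled, which forces the zero-runs of the $q^{(m)}$ to remain aligned with the spacers inherited from $W$, while at the same time the limit pair is made asymptotic and hence non-scrambled. The equal-length normalization of the loops and the characterizations in Propositions~\ref{Teolimsup0} and~\ref{Teoinf0} are what make this bookkeeping manageable.
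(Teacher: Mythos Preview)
Your reduction to $\{0,1\}^{\mathbb{N}}$ via two closed paths at $v$ follows the same high-level strategy as the paper, but the execution differs in two respects. First, your equal-length normalization $a^{|b|},b^{|a|}$ is a genuine improvement: the paper identifies $c_1,c_2$ with $0,1$ directly and then invokes Propositions~\ref{Teolimsup0} and~\ref{Teoinf0} as though the symbol shift coincided with the edge shift $\sigma$, an identification that is only innocuous when $|c_1|=|c_2|$. Second, your spacer encoding $w(\theta)=0\,\theta_1\,00\,\theta_1\theta_2\,000\,\theta_1\theta_2\theta_3\cdots$ is a different and more transparent device than the paper's. There the \emph{non-closed} scrambled set $S'_\alpha$ is built first, by freezing all coordinates outside the sparse index set $I=\{\sum_{i\le n}(i+1):n\ge 1\}$ and filling the $I$-coordinates via an injection $f:\mathcal{P}(\mathbb{N}^*)^\infty\to\{0,1\}^{\mathbb{N}}$, and only afterwards is a compact perfect subset $S''_\alpha$ extracted by restricting to those $J$ with $j_n\in\{2n-1,2n\}$. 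Your route---compact perfect set first, then enlarge---is legitimate, and the argument for $\Phi(W)$ via Propositions~\ref{Teolimsup0} and~\ref{Teoinf0} is correct.

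The gap is in your last step, and it is not merely bookkeeping. Prefixing a single $1$ shifts every spacer run of $q^{(m)}=1\,w(\theta^{(m)})$ one position to the right of the corresponding run in any $w(\theta)$, and for natural choices of $\theta^{(m)}$ this destroys scrambledness. Concretely, take $\theta^{(1)}=10^\infty$ (the obvious candidate with $\theta^{(m)}\to 0^\infty$) and $\theta=010^\infty$; a direct computation gives
\[
q^{(1)}=1\,0\,1\,0\,0\,1\,0^4\,1\,0^6\,1\cdots,\qquad w(\theta)=0\,0\,0\,0\,0\,1\,0^4\,1\,0^6\,1\cdots,
\]
so the two sequences differ only at positions $1$ and $3$, whence $\limsup_n d_X(\sigma^n\Phi(q^{(1)}),\sigma^n\Phi(w(\theta)))=0$ and the pair is not scrambled. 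The repair is to insist that each $\theta^{(m)}$ have infinitely many $1$'s, for instance $\theta^{(m)}=0^{m-1}1^\infty$ (which still converges to $0^\infty$): then for every $k$ with $\theta^{(m)}_k=1$ the last coordinate of the $k$-th data block of $q^{(m)}$ is a $1$ aligned with a spacer $0$ of $w(\theta)$, forcing infinitely many disagreements for every $\theta$. You correctly flagged this step as the obstacle, but ``suitable'' is not a proof; you must commit to such a choice and verify all three pair types $(w(\theta),w(\theta'))$, $(q^{(m)},w(\theta))$, $(q^{(m)},q^{(m')})$ explicitly. The paper avoids this difficulty altogether by constructing the non-closed set in one piece and exhibiting two explicit limit points of $S'_\alpha$ whose tails coincide.
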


\begin{proof}
Let $\mathcal{G}$ be an ultragraph such that $\#CP_{\mathcal{G}}(v) \geq 2$ for, at least, a vertex $v$. By Definition~\ref{m,n}, we can find two different closed paths $c_{1}$ and $c_{2}$ based at $v$. Notice that the set of all infinite paths formed by $c_{1}$ and $c_{2}$, $\{c_{1},c_{2}\}^{\mathbb{N}}$, is contained, and compact, in the ultragraph shift space $X$. We denote $c_{1}$ by $0$ and $c_{2}$ by $1$, and work with $\{0,1\}^{\mathbb{N}}$ instead of $\{c_{1},c_{2}\}^{\mathbb{N}}$, in order to make the notation lighter.

Now, for each natural $n$, let 
\begin{equation}\label{eqI}
\displaystyle{a_{n} = \sum_{i=1}^{n}(i+1)}
\end{equation}
and define $I \subset \mathbb{N}$ by
$\displaystyle{I := \left\{a_{n}: n \in \mathbb{N} \right\}.}$ 
Observe that $a_{1} = 2$ and $a_{n} = a_{n-1} + n + 1$ for $n \geq 2$. Furthermore, $1 \notin I$ and, for each $n \geq 2$, the set $I$ contains the elements $a_{n-1}$ and $a_{n}$ but does not contain the following set of consecutive natural numbers: $\{a_{n-1} +1, a_{n-1} + 2, \ldots, a_{n-1} + n\}.$ Therefore:
$$\mathbb{N}-I = \{\underbrace{1,}_{\mbox{one entry}}  \underbrace{3, 4,}_{\mbox{two}} \underbrace{6,7,8,}_{\mbox{three}} \underbrace{10, 11, 12, 13,}_{\mbox{four}} \underbrace{15, 16, 17, 18, 19,}_{\mbox{five}} \underbrace{21,22,23,24,25,26,}_{\mbox{six}}28\ldots\}.$$

Next, fix an element $\alpha \in \{0,1\}^{\mathbb{N}}$ and define
the set $S_{\alpha} \subset \{0,1\}^{\mathbb{N}}$ by:
\begin{equation}\label{defS}
S_{\alpha} := \left\{\beta \in \{0,1\}^{\mathbb{N}}: \beta_{i} = \alpha_{i}, \forall i \in (\mathbb{N}-I) \right\}.
\end{equation}

Note that, if $\beta^{1}, \beta^{2} \in S_{\alpha}$ and $a_{k} \in I$ (see (\ref{eqI}) above) then $\sigma^{a_{k}}(\beta^{1})$ and $\sigma^{a_{k}}(\beta^{2})$ agree in the first $k+1$ entries, for all natural $k$. So, from the second condition in Proposition~\ref{Teoinf0}, we have that
$$\displaystyle{\liminf_{n \rightarrow \infty} d_{X}(\sigma^{n}(\beta^{1}), \sigma^{n}(\beta^{2})) = 0.}$$

Observe that $S_{\alpha}$ is not a scrambled set. For instance if we choose $\beta^{1} = \alpha_1 \alpha_2 \ldots \in S_\alpha$, and let $\beta^{2}$ be any infinite path in $S_{\alpha}$ such that $\beta^{2}_{i} = \alpha_{i}$, except for a non-zero finite number of $i$, then  
$\lim d_{X}(\sigma^{n}(\beta^{1}), \sigma^{n}(\beta^{2})) = 0$.
Hence $\displaystyle{\limsup_{n \rightarrow \infty} d_{X}(\sigma^{n}(\beta^{1}), \sigma^{n}(\beta^{2})) = 0}$ and $(\beta^{1}, \beta^{2})$ is not a scrambled pair. Therefore we need to ``select" elements in $S_{\alpha}$ in order to construct a uncountable scrambled set. More precisely, we will extract a subset $S'_{\alpha} \subset S_{\alpha}$ that satisfies the following property:
$$\displaystyle{\beta^{1}, \beta^{2} \in S'_{\alpha}, \beta^{1} \neq \beta^{2} \Rightarrow \limsup_{n \rightarrow \infty} d_{X}(\sigma^{n}(\beta^{1}),\sigma^{n}(\beta^{2}))>0}.$$
Clearly such $S'_{\alpha}$ is a scrambled set. Next we show how to obtain such $S'_\alpha$.

By the definition of $S_{\alpha}$,  any element $\beta \in S_{\alpha}$ is determined by its entries indexed by $I$, because the other entries are already fixed. Therefore % for each infinite path $\gamma \in \{0,1\}^{\mathbb{N}}$  we can define a (unique) path $\beta^{\gamma} \in S_{\alpha}$ just by saying that $\beta^{\gamma}_{a_{i}} = \gamma_{i}$, for all natural $i$. Then
we have a bijection from $\{0,1\}^{\mathbb{N}}$ to  $S_{\alpha}$ defined by $\gamma \mapsto \beta^{\gamma},$
where the path $\beta^{\gamma}$ is given by: 
\begin{equation}\label{eqgbeta}
\beta_{i}^{\gamma} = \begin{cases}
           \alpha_{i}  & \text{if } i \in (\mathbb{N} - I),\\
             \gamma_{k}  & \text{if } i = a_{k} \in I.
       \end{cases} \quad
\end{equation}
Denote by $\mathcal{P}(\mathbb{N^{*}})^{\infty}$ the set of all infinite subsets of $\mathbb{N}^{*}$. Enumerate each $J \in \mathcal{P}(\mathbb{N^{*}})^{\infty}$ in an increasing order, that is, write $J = \{j_{1},j_{2}, \ldots : j_i< j_{i+1} \ \forall i \}$. Consider the following sequence associated to $J$: $ (j_{1}, j_{1}, j_{2}, j_{1},j_{2},j_{3}, j_{1},j_{2},j_{3},j_{4}, j_{1}, \ldots)$. Observe that different elements of $\mathcal{P}(\mathbb{N^{*}})^{\infty}$ induce different sequences. Define the function $f: \mathcal{P}(\mathbb{N^{*}})^{\infty} \rightarrow \{0,1\}^{\mathbb{N}}$  by:
\begin{equation}\label{eqf}
f(J) = (\underbrace{0,\dots,0,}_{j_{1}}\underbrace{1,\ldots,1,}_{j_{1}}\underbrace{0,\dots,0,}_{j_{2}}\underbrace{1,\ldots,1,}_{j_{1}}\underbrace{0,\dots,0,}_{j_{2}}\underbrace{1,\ldots,1,}_{j_{3}}\underbrace{0,\dots,0,}_{j_{1}}\underbrace{1,\ldots,1,}_{j_{2}}\ldots).
\end{equation}

Next we prove a result stronger than the injectivity of $f$: if $J_{1}$ and $J_{2}$ are distinct infinite sets belonging to $\mathcal{P}(\mathbb{N}^{*})$, then the infinite paths $f(J_{1})$ and $f(J_{2})$ are different in infinitely many entries. 
To begin, choose $J$ such that $1 \in J$. Then $j_{1} = 1$ and, according to our definition of $f$, the segments ``010'' and ``101'' appear infinitely many times in the path $f(J)$. Even more: only sets that contain the element 1 satisfy this property, that is, if $J'$ is an element of $\mathcal{P}(\mathbb{N^{*}})^{\infty}$ such that $1 \notin J'$, then ``010'' and ``101'' are not segments in the infinite path $f(J')$. More generally, given $j \in \mathbb{N}^{*}$, only sets $J$ which contain $\{j\}$ satisfy that ``$0\underbrace{11\ldots1}_{j}$0'' and ``$1\underbrace{00\ldots0}_{j}1$'' are segments in $f(J)$, and these segments appear infinitely many times in $f(J)$. So, given $J_{1} \neq J_{2}$, there is a natural $j$ that belongs to only one of the sets, say $j \in J_{1}\cap J_2^c$. Then, according to the above, the segments `$0\underbrace{11\ldots1}_{j}$0'' and ``$1\underbrace{00\ldots0}_{j}1$'' appear infinitely many times in the path $f(J_{1})$ and never appear in $f(J_{2})$. This guarantees that $f(J_{1})$
and $f(J_{2})$ differ in infinitely many entries, as we wanted.

Before we proceed we need to prove the following claim:

{\bf Claim 1:} 
Let $J_{1},J_{2}$ be two sets belonging to $\mathcal{P}(\mathbb{N}^{*})^{\infty}$ and write $J_{k} = \{j^{k}_{1},j^{k}_{2},\ldots: j_{1}^{k}<j_{2}^{k}<j_{3}^{k}< \ldots\}$, $k \in \{1,2\}$. Define $\displaystyle{i_{k,n} := \sum_{i=1}^{n}(n-i+1)j^{k}_{i}}$, for $k = 1,2$. Then, for all natural $n$, $f(J_{1})$ and $f(J_{2})$  agree in the first $i_{1,n}$ entries if, and only if, $j_{i}^{1} = j_{i}^{2}$ for all $i \leq n$. Summarizing: 
\begin{equation}\label{propriedade}
f(J_{1})_{i} = f(J_{2})_{i}, \ \forall i \leq i_{1,n} \Longleftrightarrow j_{i}^{1} = j_{i}^{2}, \  \forall i \leq n
\end{equation}
We prove the converse first. Let $J_{1}$ and $J_{2}$ be sets in $\mathcal{P}(\mathbb{N}^{*})^{\infty}$ and write then as in the hypothesis of the Lemma. If $j_{i}^{1} = j_{i}^{2}$ for all $i \leq n$ then, by equality (\ref{eqf}), we have that $f(J_{1})$ and $f(J_{2})$
agree in the first $j_{1}^{1}+(j_{1}^{1}+j^{1}_{2})+\ldots+(j_{1}^{1}+j^{1}_{2}+\ldots+j^{1}_{n})$ entries. But:
$$\displaystyle{j_{1}^{1}+(j_{1}^{1}+j^{1}_{2})+\ldots+(j_{1}^{1}+j^{1}_{2}+\ldots+j^{1}_{n}) = \sum_{i=1}^{n}(n-i+1)j^{k}_{i} =  i_{k,n}}.$$
Now we prove the forward implication of the property (\ref{propriedade}). Suppose that $f(J_{1})_{i} = f(J_{2})_{i}$ for all $i \leq i_{1,n}$. If $J_{1} = J_{2}$ there is nothing to do. So, we assume that $J_{1}$ and $J_{2}$ are distinct sets. Suppose (by contradiction) that $n_{0}:=\min\{i:j_{i}^{1} \neq j_{i}^{2}\} < n$ and assume that $j_{n_{0}}^{1}<j_{n_{0}}^{2}$. Then, by equality (\ref{eqf}) again, we have that $f(J_{1})$ and $f(J_{2})$ agree in the first $i_{1,n_{0}}$ entries, but they disagree in the $(i_{1,n_{0}}+1)$-th entry. As $n_{0} < n$, we must have $(i_{1,n_{0}}+1) \leq i_{1,n}$, what contradicts our assumption ($f(J_{1})_{i} = f(J_{2})_{i}$ for all $i \leq i_{1,n}$). Hence $n_{0} \geq n$ as we wanted and the claim is proved.

Now we get back to mainline of the proof of the theorem. Consider the following set (see (\ref{eqgbeta})): 
\begin{equation}\label{defS'}
S'_{\alpha}:= \{(\beta^{f(J)}: J \in \mathcal{P}(\mathbb{N^{*}})^{\infty} \}.
\end{equation}
By the paragraph above Claim~1, if $J_{1} \neq J_{2}$ then the paths 
$\beta^{f(J_{1})}$ and $\beta^{f(J_{2})}$ differ in infinitely many entries. Furthermore, since $\beta^{f(J_{1})}$ and $\beta^{f(J_{2})}$ can be seen as infinite paths in $\{0,1\}^{\mathbb{N}}$, we obtain, from Proposition~\ref{Teolimsup0}, that: 
$$\displaystyle{\limsup_{n \rightarrow \infty} d_{X}(\sigma^{n}(\beta^{f(J_{1})}),\sigma^{n}(\beta^{f(J_{2})}))>0},$$
for all $\beta^{f(J_{1})} \neq \beta^{f(J_{2})}$. As we already know that 
$\displaystyle{\liminf_{n \rightarrow \infty}d_{X}(\sigma^{n}(\beta^{1}),\sigma^{n}(\beta^{2}))=0}$ for all $\beta^{1}, \beta^{2} \in S_{\alpha}$, we also have that $\displaystyle{\liminf_{n \rightarrow \infty} d_{X}(\sigma^{n}(\beta^{f(J_{1})}),\sigma^{n}(\beta^{f(J_{2})}))=0}$ for all $\beta^{f(J_{1})},\beta^{f(J_{2})} \in S'_{\alpha}$. Hence $S'_{\alpha}$ is scrambled. Furthermore, $S'_{\alpha}$ is uncountable, since it is the image of the function $J \mapsto \beta^{f(J)}$, and both $f$ and $\gamma \mapsto \beta^{\gamma}$ are injective functions from $\mathcal{P}(\mathbb{N}^{*})^{\infty}$. So $S'_{\alpha}$ is an uncountable scrambled set and $X$ is Li-Yorke chaotic.

Next we show that the set $S'_{\alpha}$ is not closed, has no isolated points and all its adherent points are infinite paths. Furthermore, we show that $\overline{S'_{\alpha}}$ is not a scrambled set in $X$.

Consider the infinite paths $\beta^{1}, \beta^{2} \in \{0,1\}^{\mathbb{N}}$
given by: 
\begin{equation*}
\beta_{i}^{1} = \begin{cases}
           \alpha_{i}  & \text{if }  i \in (\mathbb{N} - I),\\
             0  & \text{if } i = a_{k} \in I
       \end{cases} \quad
\end{equation*}
and
\begin{equation*}
\beta_{i}^{2} = \begin{cases}
           \alpha_{i}  & \text{if } i \in (\mathbb{N} - I),\\
             1 & \text{if } i =5,\\
             0 & \text{if } i \in I - \{5\}.
       \end{cases} \quad
\end{equation*}
%such that 
%$\beta^{1}_{i} = \alpha_{i}$ for all $i \in \mathbb{N} \setminus I$,
%$\beta^{1}_{j} = 0$ for all $j \in I$, $\beta^{2}_{i} = \alpha_{i}$ 
%for all $i \in \mathbb{N}\setminus I$, $\beta^{2}_{2} = \beta^{2}_{5} = 1$
%and $\beta^{2}_{j} = 0$ for all $j \in I\setminus\{2; 5\}$. 
Note that any sequence of sets $J_{n} = \{j_{1}^{n},j_{2}^{n},j_{3}^{n}\ldots\}$ such that $j_{1}^{n} < j_{2}^{n} < j_{3}^{n}< \ldots$ and  $j_{1}^{n}\rightarrow \infty$ satisfies $\beta^{f(J_{n})} \rightarrow \beta^{1}$, and any sequence of sets $H_{n} = \{h_{1}^{n},h_{2}^{n},h_{3}^{n},\ldots\}$ such that $h_{1}^{n}< h_{2}^{n}< h_{3}^{n},\ldots$, $h_{1}^{n} = 1$ and $h_{2}^{n} \rightarrow \infty$ satisfies $\beta^{f(H_{n})} \rightarrow \beta^{2}$. 
So $\beta^{1},\beta^{2} \in \overline{S'_{\alpha}}$, but $\beta^{1},\beta^{2} \notin S'_{\alpha}$. Hence $S'_{\alpha}$ is not closed. Even more, as 
$\sigma^{5}(\beta^{1}) = \sigma^{5}(\beta^{2})$, it follows that $\displaystyle{\lim_{n \rightarrow \infty} d_{X}(\sigma^{n}(\beta^{1}),\sigma^{n}(\beta^{2})) = 0}$. So $(\beta^{1},\beta^{2})$ is not a scrambled pair and $\overline{S'_{\alpha}}$ is not a scrambled set.
Recall that $S_{\alpha}$, defined in (\ref{defS}), is a closed subset of $X$ formed only by infinite paths. As $S'_{\alpha} \subset S_{\alpha}$, all adherent points of $S'_{\alpha}$ are infinite paths.
%are infinite paths. In fact, fix $\gamma:=(e_{1}e_{2}\ldots e_{m}, A)$ finite path with $A$ minimal infinite path. So,  a sequence $a^{n}$ converges to $\gamma$ if given any subset finite $F \subset \varepsilon(A)$, there will be a natural $N$ such that $n \geq N$ implies $a^{n}_{m+1} \in \varepsilon(A)\setminus F$. But, as all entries can receive, possibly, only two different elements, it follows que no one sequence formed by infinite paths of $S'_{\alpha}$ will converge to $\gamma$.% 
Finally, to prove that $S'_{\alpha}$ has no isolated points, fix $\beta^{f(J)} \in S'_{\alpha}$ such
that $J \in \mathcal{P}(\mathbb{N^{*}})^{\infty}$ and write $J = \{j_{1},j_{2}, \ldots : j_i< j_{i+1} \ \forall i \}$. Define $j'_{n} := j_{n} + 1$ for all $n \in \mathbb{N}$, and consider the sequence $\{\beta^{f(J_{n})}\}$, where 
\begin{equation}\label{eqj'}
J_{n} = \{j_{1}, j_{2},j_{3}, \ldots, j_{n},j'_{n+1},j'_{n+2},j'_{n+3},\ldots\} 
\end{equation}
for all 
$n \in \mathbb{N}$. Then $\beta^{f(J_{n})} \rightarrow \beta^{f(J)}$, but $\beta^{f(J_{n})} \neq \beta^{f(J)}$ for all natural $n$. So $S'_{\alpha}$ has no isolated points.

Although $S'_{\alpha}$ has no isolated points, it is not closed, so it is not perfect neither compact. So, in the paragraphs below, we extract a subset $S''_{\alpha} \subset S'_{\alpha}$ which is still uncountable and scrambled, and is also perfect (and compact).

As before, we write each $J \in \mathcal{P}(\mathbb{N}^{*})^{\infty}$ as $J = \{j_{1}, j_{2}, \ldots: j_i< j_{i+1} \ \forall i\}$. Define the following set:
$$\mathcal{P}:= \{J \in \mathcal{P}(\mathbb{N}^{*})^{\infty}: j_{n} \in \{2n - 1; 2n\}, \forall n \in \mathbb{N}\}.$$
Note that $\mathcal{P}$ is an uncountable subset of $\mathcal{P}(\mathbb{N}^{*})^{\infty}$. According to equalities (\ref{eqgbeta}) and (\ref{eqf}), define the following set:
$$S''_{\alpha} := \{\beta^{f(J)}: J \in \mathcal{P}\}.$$

Clearly $S''_{\alpha}$ is a subset of $S'_{\alpha}$ (see equation (\ref{defS'})). We show that $S''_{\alpha}$ is closed: Let $\beta$ be a path that is the limit of a sequence $\{\beta^{f(J_{m})}\}_{m \in \mathbb{N}}$ of paths from $S''_{\alpha}$, where $J_{m} = \{j_{1}^{m},j_{2}^{m},j_{3}^{m},\ldots : j_i^m< j_{i+1}^m \ \forall i\}$, for all natural $m$. As $S''_{\alpha}$ is a subset of $\{0,1\}^{\mathbb{N}}$ (which is closed in the shift space $X$), we have  $|\beta| = \infty$. Let $n \in \mathbb{N}$. Then there is a natural $N_{n}$ such that $m \geq N_{n}$ implies $\beta_{i}^{f(J_{m})} = \beta_{i}$ for all $i \in \{1,2,3, \ldots, a_{k_{n}}\}$, where $\displaystyle{a_{m} = \sum_{i=1}^{m}(i+1)}$ and  $\displaystyle{k_{n} = 2\sum_{i=1}^{n}(n +1 -i)i}$. In particular, $f(J_{m})_{i}=\beta^{f(J_{m})}_{a_{i}} = \beta_{a_{i}}$ for all natural $m$ and all $i \leq k_{n}$. Then, $f(J_{m_{1}})_{i} = f(J_{m_{2}})_{i}$ for all $m_{1},m_{2} \geq N_{n}$ and $i \leq k_{n}$.  As $j_{i}^{m}\leq 2i$ for all naturals $i$ and $m$ we have, for all $m$:
$$\displaystyle{\sum_{i=1}^{n}(n-i+1)j_{i}^{m} \leq \sum_{i=1}^{n}(n-i+1)2i = k_{n}}.$$
Then, $f(J_{m_{1}})_{i} = f(J_{m_{2}})_{i}$ for all $\displaystyle{i \leq i_{m_{1},n} := \sum_{i=1}^{n}(n-i+1)j_{i}^{m_{1}}}$. Hence, by Claim~1 (see \ref{propriedade}), we conclude that $j^{m_{1}}_{i} = j^{m_{2}}_{i} = j_{i}^{N_{i}}$, for all $m_{1},m_{2} \geq N_{n}$ and $i \in \{1,2,3,\ldots, n\}$. Let $J := \{j^{N_{1}}_{1},j^{N_{2}}_{2}, j^{N_{3}}_{3}, \ldots\}$. We show that $\beta = \beta^{f(J)}$.  In fact, for $a_{m} = \sum_{i=1}^{m}(i+1)$ we have:
$$\beta_{a_{m}}^{f(J)} = f(J)_{m} = f(J_{N_{m}})_{m} = \beta_{a_{m}}^{f(J_{N_{m}})} = \beta_{a_{m}}.$$
Finally, for all natural $m' \notin I$, we have $\beta^{f(J_{m})}_{m'} = \alpha_{m'} = \beta_{m'}^{f(J)}$ for all $m \in \mathbb{N}$. As $\displaystyle{\lim_{m \rightarrow \infty}\beta^{f(J_{m})} = \beta}$, we have $\beta_{m'} = \alpha_{m'}$. Also, as we already know that $j_{i}^{N_{i}} \in \{2i-1,2i\}$ for all natural $i$, we have that $\beta \in S''_{\alpha}$ and $S''_{\alpha}$ is closed.

To verify that $S''_{\alpha}$ has no isolated points we use the same idea as with $S'_{\alpha}$ (see equality (\ref{eqj'})).  Let $\beta^{f(J)} \in S''_{\alpha}$ be a path where $J = \{j_{1},j_{2},j_{3}, \ldots: j_i <j_{i+1} \ \forall i\}$. For each natural $n$, if $j_{n} = 2n-1$ choose $j'_{n} = 2n$, and if $j_{n} = 2n$ choose $j'_{n} = 2n-1$, and consider the sequence $\beta^{f(J_{n})}$ such that $J_{n} = \{j_{1}, j_{2},j_{3}, \ldots, j_{n},j'_{n+1},j'_{n+2},j'_{n+3},\ldots\}$ for all 
$n \in \mathbb{N}$. Then, $\beta^{f(J_{n})} \rightarrow \beta^{f(J)}$, but $\beta^{f(J_{n})} \neq \beta^{f(J)}$ for all natural $n$. So, $S''_{\alpha}$ has no isolated points, as we wanted. 

Finally, notice that $S''_{\alpha}$ is compact, since it is a perfect subset of the compact subset $\{0,1\}^{\mathbb{N}} \cong \{c_1, c_2\}^{\mathbb{N}} \subseteq X$.

\end{proof}

As we mentioned in the introduction, Raines and Underwood have shown in \cite{Raines} that the existence of a scrambled pair implies Li-Yorke chaos in the (product topology) shift space $X$ generated by an infinite graph $\mathcal{G}$, but such phenomena does not occur in general for (product topology) labelled edge shifts. In our context, we show that the existence of a scrambled pair formed by infinite paths imply Li-Yorke chaos (Proposition~\ref{teopair}). Using this we obtain a result analogous to the one in \cite{Raines}: When the ultragraph $\G$ is just a graph, the existence of a scrambled pair implies Li-Yorke Chaos (Proposition~\ref{teopair2}). We also give an example of an ultagraph shift space (not associated to a graph) where the existence of a scrambled pair does not imply Li-Yorke chaos (Example \ref{exedoesnot}). Before we proceed we need the following lemma.

\begin{lema}\label{lemcpse} Let $\mathcal{G}$ be an ultragraph, $X$ be the associated shift space, and let $x = \gamma_{1}\gamma_{2}\gamma_{3}\ldots \in X$. If there exist an edge $e_{1}$ and a sequence of the naturals $\{i_{k}\}_{k \in \mathbb{N}}$ such that $\gamma_{i_{k}}=e_{1}$ for all natural $k$, then $CP_{\mathcal{G}}(s(e_{1})) \neq \emptyset$. Additionally, if in this case $CP_{\mathcal{G}}(s(e_{1}))$ is a singleton, say $CP_{\mathcal{G}}(s(e_{1})) = \{e_{1}e_{2}e_{3} \ldots e_{m}\}$, then there exists a finite path $\gamma$ such that $$x = \gamma e_{1}e_{2}\ldots e_{m}e_{1}e_{2}\ldots e_{m}e_{1}e_{2}\ldots e_{m}\ldots.$$
\end{lema}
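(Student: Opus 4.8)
The plan is to exploit the infinitely many occurrences of $e_1$ to detect a return to the vertex $v := s(e_1)$, and then to use the singleton hypothesis to rigidify the entire tail of $x$. Throughout I use that $x$ being an infinite path means $s(\gamma_{j+1}) \in r(\gamma_j)$ for every $j$.

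For the first assertion, fix two occurrences $\gamma_{i_1} = e_1 = \gamma_{i_2}$ with $i_1 < i_2$. The source and range relations give $s(\gamma_{i_1}) = v$ and $v = s(\gamma_{i_2}) \in r(\gamma_{i_2 - 1})$, so the segment $\gamma_{i_1}\gamma_{i_1+1}\ldots\gamma_{i_2-1}$ runs from $v$ into a range containing $v$. To produce a genuine element of $CP_{\mathcal{G}}(v)$ in the sense of Definition~\ref{m,n}, I would let $j$ be the least index with $i_1 < j$ and $s(\gamma_j) = v$ (it exists since $s(\gamma_{i_2}) = v$), and set $\beta := \gamma_{i_1}\ldots\gamma_{j-1}$. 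By minimality of $j$ one has $s(\gamma_\ell) \neq v$ for $i_1 < \ell \leq j-1$, while $s(\gamma_{i_1}) = v \in r(\gamma_{j-1})$; hence $\beta \in CP_{\mathcal{G}}(v)$ and in particular $CP_{\mathcal{G}}(v) \neq \emptyset$.

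For the second assertion, suppose $CP_{\mathcal{G}}(v) = \{e_1 e_2 \ldots e_m\}$ is a singleton. Since the closed path $\beta$ built above starts with $e_1$, this unique closed path must indeed begin with $e_1$, matching the notation of the statement. Consider the index set $V := \{j \in \mathbb{N} : s(\gamma_j) = v\}$, which is infinite because every $i_k$ lies in $V$. Writing $V = \{j_0 < j_1 < j_2 < \ldots\}$, I would observe that for each $t$ the segment $\gamma_{j_t}\gamma_{j_t+1}\ldots\gamma_{j_{t+1}-1}$ is a closed path based at $v$, by the same reasoning used for $\beta$ together with the fact that $j_t$ and $j_{t+1}$ are \emph{consecutive} in $V$ (so $v$ is not revisited strictly between them). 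By the singleton hypothesis each such segment equals $e_1 e_2 \ldots e_m$; in particular each one emits $e_1$ first, which forces $j_{t+1} = j_t + m$ and $\gamma_{j_t + s - 1} = e_s$ for $1 \leq s \leq m$.

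Consequently the tail of $x$ beyond position $j_0$ is precisely the infinite concatenation $e_1 e_2 \ldots e_m e_1 e_2 \ldots e_m \ldots$, and setting $\gamma := \gamma_1 \ldots \gamma_{j_0 - 1}$ (a finite path with $s(e_1) = v \in r(\gamma_{j_0-1}) = r(\gamma)$, and empty when $j_0 = 1$) yields $x = \gamma e_1 e_2 \ldots e_m e_1 e_2 \ldots e_m \ldots$ as claimed. The one delicate point I anticipate is verifying the middle condition $s(e_i) \neq v$ for $i > 1$ of Definition~\ref{m,n} for each inter-visit segment; this is exactly what choosing consecutive elements of $V$ secures, and it is also what lets me conclude $\gamma_{j_0} = e_1$, so that $j_0$ is the first occurrence of $e_1$ and $\gamma$ carries only the genuinely transient prefix.
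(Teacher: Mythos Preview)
Your proof is correct and follows the same overall strategy as the paper: extract a closed path at $v = s(e_1)$ from the recurrence of $e_1$, then under the singleton hypothesis decompose the tail of $x$ into consecutive return-blocks, each of which is forced to equal $e_1 e_2 \ldots e_m$. The one genuine difference is your choice of index set. The paper works with $J_{s(e_1)} = \{j : \gamma_j = e_1\}$, the occurrences of the \emph{edge} $e_1$; between two consecutive such indices there may a priori still be visits to $v$ through some other outgoing edge, so the paper runs a short contradiction argument (producing auxiliary closed paths $c'_\ell$, $c''_\ell$) to rule this out and conclude that each block $c_\ell$ is itself a closed path. You instead work with $V = \{j : s(\gamma_j) = v\}$, the visits to the \emph{vertex} $v$; consecutiveness in $V$ already guarantees $s(\gamma_\ell) \neq v$ for intermediate $\ell$, so each inter-visit block is immediately a closed path in the sense of Definition~\ref{m,n} and the contradiction step disappears. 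This buys you a slightly shorter and more transparent second half, at no cost.
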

\begin{proof}
Let $x = \gamma_{1}\gamma_{2}\gamma_{3}\ldots \in X$ and suppose that 
$\gamma_{i_{k}}=e_{1}$ for all natural $k$.  Let $J_{s(e_{1})} = \{j \in \mathbb{N}: \gamma_{j} = e_{1}\}$ and enumerate it in the increasing order, that is, write  $J_{s(e_{1})}= \{j_{1},j_{2},\ldots : j_i<j_{i+1}\}$. Fix a natural $\ell > 0$ and consider the path $c_{\ell} = \gamma_{j_{\ell}}\gamma_{j_{\ell}+1}\ldots \gamma_{j_{(\ell+1)}-1}$. Note that $\gamma_{j_{\ell}} = e_{1} = \gamma_{j_{(\ell+1)}}$ and $s(e_{1}) \in r(\gamma_{j_{(\ell+1)}-1})$. Also, if $j_{\ell} < j < j_{(\ell+1)}$, then $\gamma_{j} \neq e_{1}$.  %If $c$ is a closed path based at $v$, then $c = e_{1}e_{2}\ldots e_{m}$, as we have pointed out.%

If $c_{\ell}$ is itself a closed path based at $s(e_{1})$, then $c_{\ell} \in CP_{\mathcal{G}}(s(e_{1}))$ and we are done. On the other hand, if $c_{\ell}$ is not a closed path based at $s(e_{1})$ then, by Definition \ref{m,n}, the set $$J'_{s(e_{1})} :=\{ j \in \mathbb{N}:j_{\ell} < j <  j_{(\ell+1)} \mbox{ and } s(\gamma_{j}) = s(e_{1})\}$$ must be non empty. Let  $j_{\min} = \min \{j \in J'_{s(e_{1})}\}$. Then the path $c'_{\ell}:= \gamma_{j_{\ell}}\ldots\gamma_{(j_{\min}-1)}$ is a closed path based at $s(e_{1})$, and hence $c'_{\ell} \in CP_{\mathcal{G}}(s(e_{1}))$ and we finished the first part.

For the secont part, assume additionally that $CP_{\mathcal{G}}(s(e_{1})) = \{e_{1}e_{2}e_{3} \ldots e_{m}\}$. We will prove that the path $c_{\ell} =\gamma_{j_{\ell}}\gamma_{j_{\ell}+1}\ldots \gamma_{(j_{(\ell+1)}-1)}$ must be a closed path based at $s(e_{1})$. Suppose the opposite, that is, the path $c_{\ell}$ is not a closed path based at $s(e_{1})$. So, as we have done before, $c'_{\ell}= \gamma_{j_{\ell}}\ldots\gamma_{(j_{\min}-1)}$ is a closed path based at $s(e_{1})$. Then, $c'_{\ell} = e_{1}e_{2}\ldots e_{m}$. In the same way, there will be a closed path based at $s(e_{1})$, namely $c''_{\ell} = \gamma_{j_{\min}}\ldots\gamma_{j'}$, for some $j_{\min} \leq j'\leq j_{\ell}-1$. So, again $c''_{\ell} = e_{1}e_{2}\ldots e_{m}$ and then $\gamma_{j_{\min}} = e_{1}$. As $j_{\ell} < j_{\min} < j_{(\ell+1)}$, we have a contradiction with the increasing enumeration of $J_{s(e_{1})}$ we have written (that is, there is no element of $J_{s(e_{1})}$ greater than $j_{\ell}$ and less than $j_{(\ell+1)}$). So, $c_{\ell} \in CP_{\mathcal{G}}(s(e_{1}))$ and then $c_{\ell} = e_{1}e_{2}\ldots e_{m}$. As $\ell \in \mathbb{N}$ was arbitrary, if we set $\gamma:=\gamma_{1}\gamma_{2}\gamma_{3}\ldots\gamma_{(j_{1}-1)}$, we have:
$$x = \gamma e_{1}e_{2}\ldots e_{m}e_{1}e_{2}\ldots e_{m}e_{1}e_{2}\ldots e_{m}\ldots,$$
and this finishes the proof.
\end{proof}

We can now prove the following.

\begin{teo}\label{teopair}
An ultragraph shift space presents Li-Yorke chaos if, and only if, it has a scrambled pair formed by infinite paths.
\end{teo}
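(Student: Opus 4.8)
The plan is to prove the two implications separately. For the forward implication, suppose $(X,\sigma,d_X)$ is Li-Yorke chaotic, so it admits an uncountable scrambled set $S$. The key observation is that the set $X_{fin}$ of non-infinite elements is countable: the edge set $\mathcal{G}^1$ is countable, so the set $\mathcal{G}^*$ of finite paths is countable, and by Lemma~\ref{miniftyem} every minimal infinite emitter is either a singleton $\{v\}$ or a finite intersection $\bigcap_{e\in Y}r(e)$ with $Y\subseteq\mathcal{G}^1$ finite, so there are only countably many of them. Since $X_{fin}$ is built from countably many finite paths paired with (subsets of) this countable family, it is countable. Hence $S\cap\mathfrak{p}^\infty$ is uncountable, and picking any two of its elements yields a scrambled pair formed by infinite paths.

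For the converse, suppose $(x,y)$ is a scrambled pair with $x=\gamma_1\gamma_2\ldots$ and $y=\beta_1\beta_2\ldots$ infinite paths. The strategy is to produce a vertex $v$ with $\#CP_{\mathcal{G}}(v)\geq 2$ and then invoke Theorem~\ref{teothebest}. I will argue by contradiction, assuming $\#CP_{\mathcal{G}}(v)\leq 1$ for every vertex $v$. By condition~(i) and Proposition~\ref{Teolimsup0}, there is a subsequence $\{i_k\}$ and an edge (without loss of generality, by symmetry, an edge $\gamma_m$ of $x$) with $\gamma_{i_k}=\gamma_m$ for all $k$; thus $\gamma_m$ repeats infinitely often in $x$. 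By Lemma~\ref{lemcpse}, $CP_{\mathcal{G}}(s(\gamma_m))\neq\emptyset$, and the standing assumption forces it to be a singleton $\{c\}$; the second part of Lemma~\ref{lemcpse} then gives that $x$ is eventually periodic, $x=\delta\,c\,c\,c\cdots$. In particular $x$ uses only finitely many distinct edges.

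Next I feed this finiteness into condition~(ii). Conditions~1 and~3 of Proposition~\ref{Teoinf0} each require infinitely many pairwise distinct edges occurring in $x$ at prescribed positions, which is impossible; hence Condition~2 must hold, providing a subsequence $\{n_k\}$ along which $\sigma^{n_k}(x)$ and $\sigma^{n_k}(y)$ agree on ever-longer initial segments. Since $x$ is eventually periodic, $\{\sigma^n(x)\}$ takes only finitely many values for large $n$, each a shift of the periodic sequence $P:=c\,c\,c\cdots$; passing to a further subsequence I may assume $\sigma^{n_k}(x)=z$ is constant, whence $\sigma^{n_k}(y)\to z$. Reading off the first coordinate shows some edge of $c$ repeats infinitely often in $y$, so Lemma~\ref{lemcpse} makes $y$ eventually periodic as well, $y=\delta'\,c''\,c''\cdots$ for the unique closed path $c''$ at the relevant vertex. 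Because $\sigma^{n_k}(y)$ is a shift of $c''c''\cdots$ converging to the shift $z$ of $P$, the periodic sequences generated by $c$ and $c''$ coincide up to rotation; here one uses that the base vertex of a closed path occurs only once per period, so $c$ and $c''$ are primitive and $c^\infty,c''^\infty$ have minimal period $|c|=|c''|$.

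The contradiction now comes from a phase count. Writing $g=|\delta|$, $g'=|\delta'|$ and $c''c''\cdots=\sigma^{r}(P)$, for all large $n$ one has $\sigma^n(x)=\sigma^{n-g}(P)$ and $\sigma^n(y)=\sigma^{n-g'+r}(P)$, so the phase difference $\phi:=g'-g-r$ is constant in $n$. If $\phi\equiv 0$ modulo the period, then $\sigma^n(x)=\sigma^n(y)$ eventually, forcing $\limsup_n d_X(\sigma^n(x),\sigma^n(y))=0$ and contradicting~(i); if $\phi\not\equiv 0$, then by primitivity $\sigma^n(x)\neq\sigma^n(y)$ for all large $n$, so $d_X(\sigma^n(x),\sigma^n(y))$ takes only finitely many strictly positive values along these $n$ and $\liminf_n d_X(\sigma^n(x),\sigma^n(y))>0$, contradicting~(ii). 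Either way the assumption fails, so some vertex has at least two closed paths and Theorem~\ref{teothebest} yields Li-Yorke chaos. The main obstacle is exactly this singleton case: when every vertex carries a unique closed path, both members of the scrambled pair are forced to be eventually periodic over a common cycle, and one cannot simply exhibit a second closed path — instead one must rule the situation out through the delicate phase/primitivity argument above.
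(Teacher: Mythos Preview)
Your argument is correct and follows essentially the same route as the paper: use Proposition~\ref{Teolimsup0} to find a repeating edge in $x$, invoke Lemma~\ref{lemcpse} to make $x$ eventually periodic under a singleton closed-path hypothesis, apply Proposition~\ref{Teoinf0} (ruling out Conditions~1 and~3 by finiteness of the alphabet of $x$) to force long agreement, push periodicity to $y$ via Lemma~\ref{lemcpse} again, and conclude with the phase dichotomy. Two minor differences worth noting: you supply the forward implication explicitly via the countability of $X_{fin}$, which the paper leaves tacit; and the paper shortcuts your $c$ versus $c''$ comparison by observing that Condition~2 forces the specific edge $e_1$ (not just some edge of $c$) to recur in $y$, so $y$ is eventually periodic with the \emph{same} cycle and the rotation/primitivity discussion becomes unnecessary.
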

\begin{proof}
Let $\G$ be an ultragraph, $X$ be the associated shift space, and let $x,y \in X$ be a scrambled pair such that $x = \gamma_{1}\gamma_{2}\gamma_{3}\ldots$ and $y = \beta_{1}\beta_{2}\beta_{3}\ldots$. We will show that there is a vertex $v$ in $\mathcal{G}$ such that $\#CP_{\mathcal{G}}(v) \geq 2$.

By the definition of scrambled pair we have that $\displaystyle{\limsup_{n \rightarrow \infty} d_{X}(\sigma^{n}(x),\sigma^{n}(y)) > 0}$. So, by Proposition~ \ref{Teolimsup0}, we get a subsequence $\{i_{k}\}_{k \in \mathbb{N}}$ such that $\gamma_{i_{k}} = e_{1}$, and $\gamma_{i_{k}} \neq \beta_{i_{k}}$ for all $k \in \mathbb{N}$. Then, by Lemma \ref{lemcpse}, there is a closed path based at $s(e_{1})$, say $e_{1}e_{2}\ldots e_{m}$. So, $\#CP_{\mathcal{G}}(s(e_{1})) \geq 1$.

Assume, by contradiction, that $CP_{\mathcal{G}}(s(e_{1}))$ is a singleton, that is, $CP_{\mathcal{G}}(s(e_{1})) = \{e_{1}e_{2}\ldots e_{m}\}$. Again, by Lemma \ref{lemcpse}, there is a finite path $\gamma$ such that:
\begin{equation}\label{eqxg}
x = \gamma e_{1}e_{2}\ldots e_{m}e_{1}e_{2}\ldots e_{m}e_{1}e_{2}\ldots e_{m}\ldots.
\end{equation}

%It means that if $c$ is a closed path based at $v$, then $c = e_{1}e_{2}\ldots e_{m}$. Now, let define $J_{v} = \{j \in \mathbb{N}: \gamma_{j} = e_{1}\}$ Let say, as well, the increasing enumeration of $J_{v}$ is given by $\{j_{1},j_{2},\ldots\}$. Fix a natural $\ell > 0$. Consider the path $c = \gamma_{j_{\ell}}\gamma_{j_{\ell}+1}\ldots \gamma_{j_{\ell+1}-1}$. Notice that $\gamma_{j_{\ell}} = e_{1} = \gamma_{j_{\ell+1}}$ and $s(e_{1}) \in r(\gamma_{j_{\ell+1}-1})$. %If $c$ is a closed path based at $v$, then $c = e_{1}e_{2}\ldots e_{m}$, as we have pointed out.%
%If $c$ is not a closed path based at $v$, then if $j_{m} = \min \{ j \in \mathbb{N}:j_{\ell}+1 \leq j \leq  j_{\ell+1}-1 \mbox{ and } s(\gamma_{j}) = v\}$, the path $c':= \gamma_{j_{\ell}}\ldots\gamma_{j_{m}}$ is a closed path based at $v$, and then, $c' = e_{1}e_{2}\ldots e_{m}$. In the same way, there will be a closed path based at $v$ $c'' = \gamma_{j_{m}}\ldots\gamma_{j'}$, for some $j_{m} \leq j'\leq j_{\ell}-1$. So, $c'' = e_{1}e_{2}\ldots e_{m}$ and then $j_{m} \in J_{v}$. But this is a contradiction with the enumeration of $J_{v}$ we gave. So, $c$ is a closed path based at $v$ and $c = e_{1}e_{2}\ldots e_{m}$.  As $\ell$ was arbitrary, there will be a finite path $\gamma$ such that:
Using again that $x,y$ is a scrambled pair, we have that $\displaystyle{\liminf_{n \rightarrow \infty} d_{X}(\sigma^{n}(x),\sigma^{n}(y)) = 0}$. But, as $x$ is formed by finitely many edges, we obtain (see Proposition~\ref{Teoinf0}) a subsequence $\{n_{k}\}_{k \in \mathbb{N}}$ such that $\sigma^{n_{k}}(x)$ and $\sigma^{n_{k}}(y)$ have the same first $k$ entries, for all natural $k$. This implies that there is a sequence of the naturals, say $\{\ell_{k}\}_{k \in \mathbb{N}}$, such that $\beta_{l_{k}} = e_{1}$ for all natural $k$. Therefore, by Lemma~\ref{lemcpse} (and since we are assuming that $CP_{\mathcal{G}}(s(e_{1}))$ is a singleton), there exists a finite path $\gamma'$ such that: 
\begin{equation}\label{eqyg}
y = \gamma' e_{1}e_{2}\ldots e_{m}e_{1}e_{2}\ldots e_{m}e_{1}e_{2}\ldots e_{m}\ldots.
\end{equation}

Now notice that $\sigma^{|\gamma|}(x) = \sigma^{|\gamma'|}(y) = e_{1}e_{2}\ldots e_{m}e_{1}e_{2}\ldots e_{m}e_{1}e_{2}\ldots e_{m}\ldots$. We are then left with two possibilities. Either there is a natural $n$ such that $\sigma^{n}(x) = \sigma^{n}(y)$, and in this case $x,y$ is not a scrambled pair, or $\sigma^{n}(x) \neq \sigma^{n}(y)$ for all natural $n$. In this last case, let $e_{k}$ be the element $p_{m_{k}}$ in the enumeration of $\mathfrak{p}$, (see the definition of the metric (\ref{definmetricanova})). Then, by equations (\ref{eqxg}) and (\ref{eqyg}), for any natural $n \geq \max\{|\gamma|,|\gamma'|\}$, we have
$$d_{X}(\sigma^{n}(x),\sigma^{n}(y))\geq \min \left\{\dfrac{1}{2^{m_{k}}}: 1\leq k \leq m\right\},$$ 
and hence (again) $x,y$ is not a scrambled pair, a contradiction. We conclude that $\# CP_{\mathcal{G}}(s(e_{1}))\geq 2$ and the result now follows from Theorem~\ref{teothebest}.

%Now, suppose $|\gamma| \leq |\gamma'|$. Suppose $|\gamma'| \neq |\gamma| + m$ for all natural $m'$. If $e_{k}$ is the element $p_{m_{k}}$ in the enumeration of $\mathfrak{p}$, (see the definition of the metric (\ref{definmetricanova}) and Remark~(\ref{ordem})), then, by equations (\ref{eqxg}) and (\ref{eqyg}), for any natural $n \geq |\gamma'|$, we have:
%$$d_{X}(\sigma^{n}(x),\sigma^{n}(y))\geq \min \left\{\dfrac{1}{2^{m_{k}}}: 1\leq k \leq %m\right\}.$$
%Then, $x,y$ is not a scrambled pair, since $\liminf_{n \rightarrow \infty}d_{X}(\sigma^{n}(x),\sigma^{n}(y)) \neq 0$. Absurd. Then, $|\gamma'| = m'|\gamma|$ for some natural $m'$.  As $\sigma^{n}(x)$ and $\sigma^{n}(y)$ have the same first $k$ entries for all natural $k$ and $n > \max\{|\gamma|,|\gamma'|\}$, then $\displaystyle{\lim_{k \rightarrow \infty}d_{X}(\sigma^{n_{k}}(x),\sigma^{n_{k}}(y)) = 0}$. So, by inequality above, we must have $|\gamma| = |\gamma'|$. Then, $\sigma^{|\gamma|}(x) = \sigma^{|\gamma|}(y)$ 
%and $\displaystyle{\lim_{n \rightarrow \infty}d_{X}(\sigma^{n}(x),\sigma^{n}(y)) = 0}$, which contradicts the fact that the pair $x,y$ is scrambled. Finally, we conclude that $\# CP_{\mathcal{G}}(s(e_{1}))\geq 2$, as we wanted.

\end{proof}

\begin{prop}\label{teopair2}
Let $\mathcal{G}$ be a graph. Then the associated shift space $X$ has Li-Yorke chaos if, and only if, it has a scrambled pair.
\end{prop}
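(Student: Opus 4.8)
The plan is to prove only the nontrivial (converse) implication, since the forward one is immediate: an uncountable scrambled set contains, in particular, a scrambled pair. So suppose $(x,y)$ is a scrambled pair in $X$. The cleanest route to Li-Yorke chaos is to exhibit a vertex $v$ with $\#CP_{\mathcal{G}}(v)\geq 2$ and then invoke Theorem~\ref{teothebest}.

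First I would split into cases according to the lengths of $x$ and $y$. The case where both are finite paths ending in minimal infinite emitters is ruled out exactly as in the discussion preceding Lemma~\ref{lemata}: for such a pair $\sigma^{n}(x)$ and $\sigma^{n}(y)$ are eventually constant, so $d_{X}(\sigma^{n}(x),\sigma^{n}(y))$ is eventually constant and conditions (i) and (ii) of Definition~\ref{definscramble} cannot both hold. If both $x$ and $y$ are infinite paths, then Theorem~\ref{teopair} applies directly and $X$ is Li-Yorke chaotic. Thus the only case that requires work is the mixed one, say $x=\gamma_{1}\gamma_{2}\ldots$ infinite and $y$ a finite path with $\sigma^{n}(y)=A$ for all large $n$, where $A$ is a minimal infinite emitter.

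Here I would invoke the graph hypothesis decisively: by Lemma~\ref{miniftyem}, every minimal infinite emitter of a graph is a singleton, so $A=\{v\}$ for some infinite emitter $v$. Since $\sigma^{n}(y)=\{v\}$ eventually, the scrambled conditions become statements about $d_{X}(\sigma^{n}(x),\{v\})$, which I would read through Propositions~\ref{teosupemm} and~\ref{teoinfemm}. The liminf condition, via Proposition~\ref{teoinfemm}, yields infinitely many pairwise distinct edges $\gamma_{j_{k}}$ in $x$ with $s(\gamma_{j_{k}})=v$; in particular $v$ is visited infinitely often. Enumerating all visit positions $n_{1}<n_{2}<\ldots$ (those $n$ with $s(\gamma_{n})=v$), each block $\gamma_{n_{k}}\ldots\gamma_{n_{k+1}-1}$ is a closed path based at $v$ in the sense of Definition~\ref{m,n}: it starts at $v$, and because for a graph $r$ takes values in singletons we have $r(\gamma_{n_{k+1}-1})=\{v\}$, so it returns to $v$ with no intermediate visit. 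Since infinitely many \emph{distinct} edges with source $v$ occur among the $\gamma_{n_{k}}$, at least two of these blocks begin with different edges and are therefore distinct closed paths. Hence $\#CP_{\mathcal{G}}(v)\geq 2$, and Theorem~\ref{teothebest} completes the proof.

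The step I expect to be the crux is the passage from ``infinitely many distinct edges out of $v$ occur in $x$'' to ``there are at least two distinct closed paths based at $v$'', together with the essential use of $r$ being single-valued. For a general ultragraph the minimal infinite emitter $A$ may be a genuine intersection $\bigcap_{e\in Y}r(e)$ containing several vertices, so the edges with source in $A$ need not all leave one vertex and the closed-path count can stay below $2$; this is precisely the obstruction realized by Example~\ref{exedoesnot}. The argument should therefore make explicit where the single-valuedness of $r$ enters and should not attempt to extend beyond it.
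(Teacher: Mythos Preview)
Your proof is correct and follows essentially the same route as the paper: reduce to the mixed case, use Proposition~\ref{teoinfemm} to produce infinitely many pairwise distinct edges with source $v$, build closed paths at $v$ from the return blocks, and conclude $\#CP_{\mathcal{G}}(v)\geq 2$ so that Theorem~\ref{teothebest} applies. The only organizational difference is that you enumerate \emph{all} visit times to $v$, so each block $\gamma_{n_k}\ldots\gamma_{n_{k+1}-1}$ is already a closed path by construction, whereas the paper indexes on the positions of the distinct edges and invokes the extraction procedure of Lemma~\ref{lemcpse} to pass from $c_\ell$ to a genuine closed path $c'_\ell$; your bookkeeping is slightly cleaner here. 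One minor remark: the graph hypothesis is really needed only to force $A=\{v\}$; once that is known, the closed-path condition $v\in r(\gamma_{n_{k+1}-1})$ follows from $s(\gamma_{n_{k+1}})=v$ alone, without appealing again to single-valuedness of $r$.
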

\begin{proof}
Let $x,y$ be a scrambled pair and, without loss of generality, assume that $x = \gamma_{1}\gamma_{2}\gamma_{3}\ldots$ is an infinite path. If $y$ is also an infinite path, then the result follows from the previous theorem. So, assume that $y$ is a finite path. Again without loss of generality we can assume that $y = v$, where $v$ is vertex which is a minimal infinite emitter. By Proposition~ \ref{teoinfemm} there is a subsequence $\{\gamma_{i_{k}}\}_{k \in \mathbb{N}}$ of distinct edges such that $s(\gamma_{i_{k}}) = v$ for all natural $k$. Now, for each natural $\ell$, consider the path $c_{\ell}=\gamma_{i_{\ell}}\gamma_{i_{\ell}+1}\ldots\gamma_{i_{(\ell+1)}-1}$. Proceeding as in Lemma~\ref{lemcpse}, we construct a closed path based at $v$, say $c'_{\ell}$, that begins at $\gamma_{i_{\ell}}$ for each natural $\ell$. As $\gamma_{i_{\ell}} \neq \gamma_{i_{\ell'}}$ for all $\ell \neq \ell'$, the set $\{c'_{\ell}: \ell \in \mathbb{N}\}$ has infinite cardinality. But $\{c'_{\ell}: \ell \in \mathbb{N}\} \subset CP_{\mathcal{G}}(v)$. Hence $\#CP_{\mathcal{G}}(v) \geq 2$ and by Theorem~\ref{teothebest} we get that $X$ has Li-Yorke chaos.
\end{proof}

Before we give an example of an ultragraph with a scrambled pair that does not present Li-Yorke chaos (Example~\ref{exedoesnot}), we summarize our results about chaoticity on ultragraphs shift spaces below.

\begin{teo}\label{teosumma}
Let $\mathcal{G}$ be an ultragraph and $X$ be the associated shif space. The following statements are equivalent:
\begin{enumerate}
    \item $\mathcal{G}$ has a vertex $v$ such that $\#CP_{\mathcal{G}}(v)\geq 2$;
    \item $X$ has a scrambled pair formed by infinite paths;
    \item $X$ has an uncountable scrambled set which is perfect and compact;
    \item  $X$ is Li-Yorke chaotic.
\end{enumerate}
\end{teo}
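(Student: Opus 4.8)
The plan is to prove the four statements equivalent by establishing the cyclic chain of implications $(1)\Rightarrow(3)\Rightarrow(4)\Rightarrow(2)\Rightarrow(1)$, since essentially all of the substantive work has already been carried out in Theorem~\ref{teothebest} and Theorem~\ref{teopair}. This is largely an assembly of earlier results, so the argument should be short; the only real care needed is in tracking precisely what each cited result delivers and verifying that the implications genuinely close a cycle.

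First, $(1)\Rightarrow(3)$ is exactly the conclusion of Theorem~\ref{teothebest}: the hypothesis that some vertex $v$ satisfies $\#CP_{\mathcal{G}}(v)\geq 2$ is precisely what that theorem assumes, and among its conclusions is the existence of the uncountable scrambled set $S''_{\alpha}$, which is shown there to be perfect and compact. So nothing new is required here. Next, $(3)\Rightarrow(4)$ is immediate from Definition~\ref{definscramble}: a perfect and compact uncountable scrambled set is in particular an uncountable scrambled set, and the existence of such a set is exactly what it means for the system $(X,\sigma,d_X)$ to be Li-Yorke chaotic.

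The implication $(4)\Rightarrow(2)$ is one direction of Theorem~\ref{teopair}, which asserts that $X$ is Li-Yorke chaotic if and only if it admits a scrambled pair formed by infinite paths; I would simply quote the forward direction. The remaining implication $(2)\Rightarrow(1)$ is the most substantial link in the chain, but it too has in fact already been established inside the proof of Theorem~\ref{teopair}. There, starting from a scrambled pair $x=\gamma_{1}\gamma_{2}\ldots$ and $y=\beta_{1}\beta_{2}\ldots$ of infinite paths, one uses Proposition~\ref{Teolimsup0} to extract an edge $e_{1}$ occurring infinitely often along $x$ at indices where $x$ and $y$ differ, applies Lemma~\ref{lemcpse} to produce a closed path based at $s(e_{1})$, and then rules out $\#CP_{\mathcal{G}}(s(e_{1}))=1$: a single closed path would force, via Lemma~\ref{lemcpse} together with Proposition~\ref{Teoinf0}, that both $x$ and $y$ are eventually periodic with a common tail, contradicting condition~(i) of the scrambled hypothesis. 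Hence $\#CP_{\mathcal{G}}(s(e_{1}))\geq 2$, which is statement~$(1)$.

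Since each of the four implications reduces to quoting the correct direction of a result already proved, I do not expect any genuine mathematical obstacle. The one point deserving attention is purely organizational: one must confirm that $(1)\Rightarrow(3)\Rightarrow(4)\Rightarrow(2)\Rightarrow(1)$ indeed forms a closed loop, so that all four statements become mutually equivalent, rather than accidentally proving only a partial order of implications that omits one of the statements.
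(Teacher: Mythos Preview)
Your proposal is correct and follows essentially the same approach as the paper: both assemble the equivalence from Theorem~\ref{teothebest} for $(1)\Rightarrow(3)$, Definition~\ref{definscramble} for $(3)\Rightarrow(4)$, Theorem~\ref{teopair} for the equivalence of $(2)$ and $(4)$, and the argument inside the proof of Theorem~\ref{teopair} for $(2)\Rightarrow(1)$. The only cosmetic difference is that you close a single cycle $(1)\Rightarrow(3)\Rightarrow(4)\Rightarrow(2)\Rightarrow(1)$, whereas the paper phrases it as $(1)\Rightarrow(3)\Rightarrow(4)$, $(2)\Leftrightarrow(4)$, and $(2)\Rightarrow(1)$.
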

\begin{proof}
 Implication $1. \Rightarrow 3.$ is Theorem~\ref{teothebest} and $3. \Rightarrow 4.$ follows by Definition \ref{definscramble}. The statements 2. and 4. are equivalent by Theorem~ \ref{teopair} and, in the proof of Theorem~ \ref{teopair}, we showed that $2. \Rightarrow 1.$.
\end{proof}

\begin{exe}\label{exedoesnot}
\textit{An ultragraph shift space which has a scrambled pair but does not present Li-Yorke chaos.} 

Let $\mathcal{G}$ be the ultragraph with vertices $\{u_{k}: k\geq 0\}\cup \{v_{k}: k\geq 0\}$, edges $\{e_{k}:k\geq 0\} \cup \{f_{k}:k\geq 0\}$, source map given by $s(e_{k}) = u_{k}$ and $s(f_{k}) = v_{k}$ for all $k \geq 0$, and range map given by:
\begin{equation*}
r(e_{k}) = \begin{cases}
           \{u_{2k}: k \geq 1\}  & \text{if } k=0\\
             u_{k+1}  & \text{if } k \neq 0 ,
       \end{cases} \quad
\end{equation*}

\begin{equation*}
r(f_{k}) = \begin{cases}
           \{v_{1}, v_{2}, u_{1}\}  & \text{if } k=0\\
             \{v_{2k+1}, v_{2k+2}\}& \text{if } k \neq 0.
       \end{cases} \quad
\end{equation*}
We represent $\G$ in the picture below:

\begin{tikzpicture}[line cap=round,line join=round,>=triangle 45,x=1.0cm,y=1.0cm]
\clip(-5.,-1.) rectangle (11.,7.);
\draw [->,line width=1.2pt] (1.,0.) -- (4.,0.);
\draw [->,line width=1.2pt] (3.,2.) -- (5.,0.);
\draw [->,line width=1.2pt] (5.,0.) -- (7.,2.);
\draw [->,line width=1.2pt] (7.,2.) -- (9.,0.);
\draw [->,line width=1.2pt] (-2.,0.) -- (-1.5013065843621511,1.7964814814814956);
\draw [->,line width=1.2pt] (-2.980730289372258,2.0255242014501245) -- (-3.,3.8);
\draw [->,line width=1.2pt] (0.,2.0255242014501245) -- (0.9994341563785912,3.799444444444468);
\draw [->,line width=1.2pt] (-4.,4.) -- (-3.996121399176968,5.796481481481526);
\draw [->,line width=1.2pt] (-2.,4.) -- (-1.9990843621399292,5.796481481481526);
\draw [->,line width=1.2pt] (0.,4.) -- (0.,5.796481481481526);
\draw [->,line width=1.2pt] (1.9018714481677512,4.018012280357944) -- (1.9922932588673248,5.796481481481525);
\draw [line width=0.8pt] (-4.2,3.8)-- (-1.8,3.8);
\draw [line width=0.8pt] (-0.2,3.8)-- (2.2,3.8);
\draw [line width=0.8pt] (-3.195030864197519,1.7996502057613117)-- (3.20265946502057,1.7973405349794298);
\draw [line width=0.8pt] (-3.195030864197519,1.7996502057613117)-- (-3.1941341694428114,2.5924456202233936);
\draw [line width=0.8pt] (3.197919530601845,1.7973422461732311)-- (3.1979227905153804,2.598193872885227);
\draw [line width=0.8pt] (-3.1941341694428114,2.5924456202233936)-- (3.1979227905153804,2.598193872885227);
\draw [line width=0.8pt] (-4.2,3.8)-- (-4.192174537853551,4.569126004311182);
\draw [line width=0.8pt] (-1.8009014305310602,3.8)-- (-1.8009014305310602,4.569126004311182);
\draw [line width=0.8pt] (-4.192174537853551,4.569126004311182)-- (-1.8009014305310602,4.569126004311182);
\draw [line width=0.8pt] (-0.1971389378796784,3.8)-- (-0.1971389378796784,4.549725651577499);
\draw [line width=0.8pt] (2.2,3.8)-- (2.1998824221046447,4.549725651577499);
\draw [line width=0.8pt] (-0.1971389378796784,4.549725651577499)-- (2.1998824221046447,4.549725651577499);
\draw [line width=0.8pt] (4.,-0.5)-- (4.,0.5);
\draw [line width=0.8pt] (4.,0.5)-- (9.296279966033083,0.497207524985293);
\draw [line width=0.8pt] (4.,-0.5)-- (9.304902345025832,-0.5029884381736289);
\begin{scriptsize}
\draw [fill=black] (1.,0.) circle (1.5pt);
\draw[color=black] (0.7094682866287849,0.04668822261413716) node {$u_{0}$};
\draw [fill=black] (3.,2.) circle (1.5pt);
\draw[color=black] (2.7,2.1) node {$u_{1}$};
\draw [fill=black] (5.,0.) circle (1.5pt);
\draw[color=black] (5.3461525899797655,0.03) node {$u_{2}$};
\draw [fill=black] (7.,2.) circle (1.5pt);
\draw[color=black] (7.247387157881005,2.1) node {$u_{3}$};
\draw [fill=black] (9.,0.) circle (1.5pt);
\draw[color=black] (8.5,0.03) node {$u_{4}$};
\draw[color=black] (2.28089685805736,0.2) node {$e_{0}$};
\draw[color=black] (4.05,1.3077111503037273) node {$e_{1}$};
\draw[color=black] (5.95,1.3077111503037273) node {$e_{2}$};
\draw[color=black] (8.0,1.3077111503037273) node {$e_{3}$};
\draw [fill=black] (-2.,0.) circle (1.5pt);
\draw[color=black] (-1.7737768632830373,0.04668822261413716) node {$v_{0}$};
\draw [fill=black] (0.,2.0255242014501245) circle (1.5pt);
\draw[color=black] (0.35,2.1) node {$v_{2}$};
\draw [fill=black] (-2.980730289372258,2.0255242014501245) circle (1.5pt);
\draw[color=black] (-2.666193089032598,2.1) node {$v_{1}$};
\draw [fill=black] (-4.,4.) circle (1.5pt);
\draw[color=black] (-3.7,4.1) node {$v_{3}$};
\draw [fill=black] (-2.,4.) circle (1.5pt);
\draw[color=black] (-2.3,4.1) node {$v_{4}$};
\draw [fill=black] (0.,4.) circle (1.5pt);
\draw[color=black] (0.3214612319550628,4.1) node {$v_{5}$};
\draw [fill=black] (1.9018714481677512,4.018012280357944) circle (1.5pt);
\draw[color=black] (1.58248415964466,4.1) node {$v_{6}$};
\draw[color=black] (-1.9677803906198983,1.1525083284342394) node {$f_{0}$};
\draw[color=black] (-3.190002612842123,3.228346070938642) node {$f_{1}$};
\draw[color=black] (0.36,3.228346070938642) node {$f_{2}$};
\draw[color=black] (-4.198820954993801,5.12958063883987) node {$f_{3}$};
\draw[color=black] (-2.2199849761578174,5.12958063883987) node {$f_{4}$};
\draw[color=black] (-0.2217486445881483,5.12958063883987) node {$f_{5}$};
\draw[color=black] (1.7182866287804628,5.12958063883987) node {$f_{6}$};
\draw [fill=black] (-3.,6.) circle (0.5pt);
\draw [fill=black] (-2.9949212881311724,6.412159514011333) circle (0.5pt);
\draw [fill=black] (-2.9949212881311724,6.800166568685053) circle (0.5pt);
\draw [fill=black] (1.,6.) circle (0.5pt);
\draw [fill=black] (1.0015513750081704,6.399225945522209) circle (0.5pt);
\draw [fill=black] (1.0015513750081704,6.787233000195928) circle (0.5pt);
\draw [fill=black] (9.503217061859061,1.2990887713109789) circle (0.5pt);
\draw [fill=black] (9.79637794761254,1.2990887713109789) circle (0.5pt);
\draw [fill=black] (10.115405970344266,1.2990887713109789) circle (0.5pt);
\draw [fill=black] (9.5,0.) circle (0.5pt);
\draw [fill=black] (9.80500032660529,0.) circle (0.5pt);
\draw [fill=black] (10.098161212358768,0.) circle (0.5pt);
\end{scriptsize}
\end{tikzpicture}

Let $X$ be the shift space associated to $\mathcal{G}$, let $x = e_{0}e_{2}e_{3}e_{4}\ldots$ and let $y = r(e_{0})$. %By the definition of $\sigma$, $\sigma(r(e_{0})) = r(e_{0})$. 
As $s(e_{2k-1}) \notin r(e_{0})$ for all $k \geq 1$ we have, by Proposition~\ref{teosupemm}, that $\displaystyle{\limsup_{n \rightarrow \infty} d_{X}(\sigma^{n}(x),y) > 0}$. Since $s(e_{2k}) \in r(e_{0})$ for all $k \geq 1$ we have, by Proposition~\ref{teoinfemm}, that $\displaystyle{\liminf_{n \rightarrow \infty} d_{X}(\sigma^{n}(x),y) = 0}$. Hence $x,y$ is a scrambled pair. 

Now, as $\G$ has no vertex $v$ such that $\#CP_{\mathcal{G}}(v)\geq 2$, it follows by Teorema~\ref{teosumma} that $X$ does not have an uncountable scrambled set. In fact, it is not hard to check that the only scrambled pairs in $X$ different from $(x,y)$ are of the form 
 $(\sigma^{n}(f_0e_1e_2e_3,\ldots),y)$ for $n\geq 0$.
\end{exe}

\section*{Acknowledgments}

\noindent D. Gon\c{c}alves was partially supported by Conselho Nacional de Desenvolvimento Cient\'{\i}fico e Tecnol\'{o}gico -
CNPq.

\noindent B. B. Uggioni was partially supported by Conselho Nacional de Desenvolvimento Cient\'{\i}fico e Tecnol\'{o}gico -
CNPq.

\end{document}